\documentclass[oneside, 12pt]{amsart}
\usepackage[utf8x]{inputenc}
\usepackage{amssymb, mathrsfs, amsthm, amsmath, url}
\usepackage{stmaryrd}
\usepackage[all]{xy}
\usepackage{enumerate}
\usepackage{mdwlist}
\usepackage{graphicx}
\usepackage{hyperref}
\usepackage[multiple]{footmisc}

\setlength{\textwidth}{15cm}
\setlength{\oddsidemargin}{+0.1in} 

\renewcommand{\AA}{\mathbb{A}}

\newcommand{\C}{\mathcal{C}}

\newcommand{\GG}{\mathbb{G}}

\newcommand{\m}{\mathfrak{m}}

\newcommand{\OO}{\mathcal{O}}
\newcommand{\p}{\mathfrak{p}}

\newcommand{\ZZ}{\mathbb{Z}}

\newcommand{\id}{\mathrm{id}}

\newcommand{\Ab}{\mathrm{Ab}}
\newcommand{\Set}{\mathrm{Set}}

\newcommand{\Aff}{\mathrm{Aff}}
\newcommand{\Sch}{\mathrm{Sch}}

\newcommand{\PreShv}{\mathrm{PreShv}}
\newcommand{\Shv}{\mathrm{Shv}}

\newcommand{\Fib}{\mathrm{Fib}}
\newcommand{\cS}{\mathcal{S}}

\newcommand{\Zar}{\mathrm{Zar}}
\newcommand{\Nis}{\mathrm{Nis}}
\newcommand{\Et}{\mathrm{Et}}
\newcommand{\qfh}{\mathrm{qfh}}
\newcommand{\cl}{\mathrm{cl}}
\newcommand{\fin}{\mathrm{f}}
\newcommand{\rh}{\mathrm{rh}}
\newcommand{\cdh}{\mathrm{cdh}}
\newcommand{\cdf}{\mathrm{cdf}}
\newcommand{\cdp}{\mathrm{cdp}}
\newcommand{\ldh}{{l\mathrm{dh}}}
\newcommand{\proptop}{\mathrm{prop}}
\newcommand{\eh}{\mathrm{eh}}
\newcommand{\htop}{\mathrm{h}}
\newcommand{\fppf}{\mathrm{fppf}}
\newcommand{\fps}{\mathrm{fps}}
\newcommand{\fpsl}{\mathrm{fps}l'}
\newcommand{\aff}{\ensuremath{^\mathrm{aff}}}

\newcommand{\aic}{{a.i.c.}}
\newcommand{\red}{\mathrm{red}}

\newtheorem{theo}{Theorem}[section]
\newtheorem*{theoUn}{Theorem}

\newtheorem{lemm}[theo]{Lemma}
\newtheorem{prop}[theo]{Proposition}

\theoremstyle{definition}

\newtheorem{defi}[theo]{Definition}
\newtheorem{warn}[theo]{Warning}
\newtheorem{rema}[theo]{Remark}

\DeclareMathOperator{\Spec}{Spec}
\DeclareMathOperator{\Pro}{Pro}
\DeclareMathOperator{\Frac}{Frac}

\title{Points in algebraic geometry}

\author{Ofer Gabber}%
\address{Ofer Gabber, IH\'ES, 35 route de Chartres, 
91440 Bures-sur-Yvette, France}
\email{gabber@ihes.fr}

\author{Shane Kelly}%
\address{Shane Kelly, Interactive Research Center of Science, Graduate School of Science and Engineering, Tokyo Institute of Technology,
2-12-1 Ookayama, Meguro,
Tokyo 152-8551 JAPAN}
\email{shanekelly64@gmail.com}

\begin{document}

\maketitle

\begin{abstract}
We give scheme-theoretic descriptions of the category of fibre functors on the categories of sheaves associated to the Zariski, Nisnevich, étale, rh, cdh, ldh, eh, qfh, and h topologies on the category of separated schemes of finite type over a separated noetherian base. Combined with a theorem of Deligne on the existence of enough points, this provides an algebro-geometric description of a conservative family of fibre functors on these categories of sheaves. As an example of an application we show direct image along a closed immersion is exact for all these topologies except qfh. The methods are transportable to other categories of sheaves as well.
\end{abstract}


\section*{Introduction}

Stalks play an important rôle in the theory of sheaves on a topological space, principally via the fact that a morphism of sheaves is an isomorphism if and only it induces an isomorphism on every stalk. In general topos theory, this is no longer true (see \cite[IV.7.4]{SGA41} for an example of a non-empty topos with no fibre functors). However there is an abstract topos theoretic theorem of Deligne which says that under some finiteness hypotheses which are almost always satisfied in algebraic geometry, one can indeed detect isomorphisms using fibre functors (Theorem~\ref{theo:deligne}).

For the étale topology (on the category of finite type étale morphisms over a noetherian scheme $X$) one has an extremely useful algebraic description of the fibre functors as a certain class of morphisms $\Spec(R) \to X$ where $R$ is a strictly henselian local ring. One can define a ring $R$ to be a strictly henselian local ring if for every jointly surjective family of étale morphism of finite type $\{U_i \to \Spec(R)\}_{i \in I}$, there exists $i \in I$ such that $U_i \to \Spec(R)$ admits a section. Let $S$ be a separated noetherian scheme, and let $\tau$ be a topology on the category $\Sch / S$ of finite type separated $S$-schemes.

\begin{defi} \label{defi:local}
Let us say that an $S$-scheme $P \to S$ (not subject to any finiteness conditions) is \emph{$(\Sch / S, \tau)$-local} if for every every $\tau$-cover $\{U_i \to X\}_{i \in I}$ in $\Sch / S$ the canonical morphism
\[ \amalg_{i \in I} \hom_S(P, U_i) \to \hom_S(P, X) \]
is surjective.
\end{defi}

Our first goal is to observe that for many nice topologies $\tau$ on $\Sch / S$, there is a canonical equivalence between the category of $(\Sch / S, \tau)$-local $S$-schemes whose structural morphism is affine, and the category of fibre functors on $\Shv_\tau(\Sch / S)$ (Theorem~\ref{theo:equi}). For this, we pass through a third category: the category of \emph{$\tau$-local pro-objects} (Definition~\ref{defi:proLocal}) in $\Sch / S$ (one could think of such pro-objects as the system of neighbourhoods of the ``point'' in question). One has the following equivalences of categories:
\begin{equation} \label{equa:equivalences}
\left \{ \begin{array}{cc} 
\textrm{  fibre functors } \\ 
\textrm{  on } \Shv_\tau(\C)
\end{array} \right  \}^{\mathrm{op}}
\cong 
\left \{ \begin{array}{cc} 
\tau\textrm{-local} \\ 
\textrm{pro-objects in } \C
\end{array} \right  \}
\cong
\left \{ \begin{array}{cc} 
(\Sch / S, \tau)\textrm{-local }  \\ 
\textrm{affine } S\textrm{-schemes} 
\end{array} \right  \}
\end{equation}

The first equivalence is an old topos theoretic result valid for any category $\C$ admitting finite limits and equipped with a topology $\tau$. The second equivalence is a standard application of the limit arguments for schemes in \cite[\S 8]{EGAIV3} applied to $\C = \Sch / S$, and is valid for any topology $\tau$ finer than the Zariski topology.%
\footnote{The second is valid in many other situations which the reader can work out according to their needs. For example, the étale, qfh, Nisnevich, Zariski, etc topologies admit various ``small'' sites, and when $\C$ is taken to be such a small site instead of $\Sch / S$, we obtain the full subcategory of the category of $(\C, \tau)$-local schemes consisting of those which are obtainable as inverse limits of pro-objects of $\C$.} %
The combination of this equivalence with Deligne's theorem leads to statements such as the following:

\begin{theo}[{cf. Theorem~\ref{theo:equi}}] \label{theo:detect}
Suppose that $S$ is a separated noetherian scheme, $\tau$ a topology on $\Sch / S$ finer than the Zariski topology, for which every covering family is refinable by a covering family indexed by a finite set. Then a morphism $f: F \to G$ in $\Shv_{\tau}(\Sch / S)$ is an isomorphism if and only if $f(P)$ is an isomorphism%
\footnote{By abuse of notation, by $f(P)$ we mean $\varinjlim_{(P \backslash \Sch / S)} f(X)$, where $(P \backslash  \Sch / S)$ is the category of factorizations $P \to X \to S$ with $X \in \Sch / S$.} %
 for every $(\Sch / S, \tau)$-local scheme $P$. 
\end{theo}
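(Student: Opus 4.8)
The plan is to feed the two equivalences of \eqref{equa:equivalences} into Deligne's theorem (Theorem~\ref{theo:deligne}). The refinability hypothesis is exactly what is needed to invoke the latter: it guarantees that $\Shv_\tau(\Sch/S)$ has enough points, so that $f$ is an isomorphism if and only if $p^*(f)$ is an isomorphism for every fibre functor $p^*$ on $\Shv_\tau(\Sch/S)$. The remaining content is then bookkeeping: transporting the quantifier ``for every fibre functor'' across the equivalences to ``for every $(\Sch/S,\tau)$-local affine scheme $P$'', and identifying the value $p^*(f)$ with $f(P)$ in the sense of the footnote. I would prove the two implications separately, since only the ``if'' direction requires Deligne.

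For the ``only if'' direction, suppose $f$ is an isomorphism of sheaves. Then it is an isomorphism of the underlying presheaves, so $f(X)\colon F(X)\to G(X)$ is a bijection for every $X\in\Sch/S$; in particular $f$ restricts to a natural isomorphism of the two diagrams $X\mapsto F(X)$ and $X\mapsto G(X)$ on the comma category $(P\backslash\Sch/S)$. Since $f(P)$ is the colimit $\varinjlim_{(P\backslash\Sch/S)}f(X)$ and the colimit functor carries natural isomorphisms of diagrams to isomorphisms, $f(P)$ is an isomorphism. (One may note in passing that $(P\backslash\Sch/S)$ is cofiltered, because $\Sch/S$ has fibre products and equalizers and $P$ maps compatibly to them, so $f(P)$ is in fact a filtered colimit; but this is not needed here.) This argument holds for every $S$-scheme $P$, local or not.

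For the ``if'' direction, suppose $f(P)$ is an isomorphism for every $(\Sch/S,\tau)$-local scheme $P$; in particular for every affine one. By Deligne's theorem it suffices to show $p^*(f)$ is an isomorphism for every fibre functor $p^*$. Given $p^*$, the first equivalence in \eqref{equa:equivalences} produces a $\tau$-local pro-object $\{X_\lambda\}$ whose associated stalk functor satisfies $p^*(F)=\varinjlim_\lambda F(X_\lambda)$, and the second equivalence identifies this pro-object with the $(\Sch/S,\tau)$-local affine scheme $P=\varprojlim_\lambda X_\lambda$. The key point is that this stalk coincides with $f(P)$: the limit arguments of \cite[\S 8]{EGAIV3} give $\hom_S(P,X)=\varinjlim_\lambda \hom_S(X_\lambda,X)$ for finitely presented $X$, so the system $\{X_\lambda\}$ is cofinal in $(P\backslash\Sch/S)$ and the two filtered colimits agree. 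Hence $p^*(f)=f(P)$ is an isomorphism, and therefore $f$ is an isomorphism.

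I expect the main obstacle to be precisely this identification $p^*(f)=f(P)$, that is, the verification that the stalk of the fibre functor attached to $P$ by the composite equivalence is computed by the colimit over $(P\backslash\Sch/S)$. This amounts to checking that the cofiltered system of ``neighbourhoods'' $\{X_\lambda\}$ presenting $P$ is cofinal among all factorizations $P\to X\to S$, which rests on the \cite[\S 8]{EGAIV3} limit formula above together with the compatibility of the two equivalences with evaluation of sheaves. A secondary point worth recording is that passing from all local $P$ to the affine local $P$ costs nothing in the ``if'' direction: the hypothesis is assumed for every local $P$, and only the affine ones are required to exhaust the fibre functors.
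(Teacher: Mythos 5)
Your proposal is correct and takes essentially the same approach as the paper: Theorem~\ref{theo:detect} is there deduced from Theorem~\ref{theo:equi}, whose proof is exactly your combination of the equivalences \eqref{equa:equivalences} (Propositions~\ref{prop:proPoints} and~\ref{prop:equiLimAff}, routed through $\Aff/S$) with Deligne's Theorem~\ref{theo:deligne}. The identification $p^*(f)=f(P)$ that you flag as the main obstacle is built into the statement of Theorem~\ref{theo:equi}, where the equivalence sends $P$ to $\phi_P\colon F\mapsto \varinjlim_{(P\to X\to S)}F(X)$; your cofinality argument via \cite[\S 8]{EGAIV3} supplies precisely that step.
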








A second goal is to give an algebraic description of $(\Sch / S, \tau)$-local schemes for various topologies arising in algebraic geometry.

\begin{theoUn}[{\ref{theo:localSchemes}}]
Suppose that $S$ is a separated noetherian scheme. An affine $S$-scheme (i.e., an $S$-scheme whose structural morphism is affine) is $(\Sch / S, \tau)$-local if and only if it is ($\ast$) where $\tau$ and ($\ast$) are as in Table~\ref{tabl:local} on page \pageref{tabl:local}.
\end{theoUn}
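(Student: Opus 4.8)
The plan is to unwind Definition~\ref{defi:local} into a concrete lifting property and then test it against a generating class of covers for each topology. Writing $P = \Spec(R)$, the scheme $P$ is $(\Sch / S, \tau)$-local precisely when every $S$-morphism $P \to X$ with $X \in \Sch / S$ factors through some member $U_i \to X$ of every $\tau$-cover. Since each $\tau$ in Table~\ref{tabl:local} is generated by a short list of distinguished covers (Zariski opens, standard étale maps, distinguished Nisnevich squares, abstract blow-ups / proper $\cdp$ morphisms, and finite surjective or finite flat covers), and since factoring through a refinement forces factoring through the original cover, it suffices to verify the lifting property against these generators. Throughout I would also install the limit formalism of \cite[\S 8]{EGAIV3}: as $R$ carries no finiteness hypothesis, I write $P = \varprojlim_\alpha P_\alpha$ as a cofiltered limit of finite-type affine $S$-schemes, so that any morphism $P \to X$, any cover, and any section descends to a finite level. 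This is what lets me pass freely between the noetherian finite-type world where the covers live and the possibly non-noetherian ring $R$.

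For the ``only if'' direction I would, assuming $P$ is $(\Sch / S, \tau)$-local, manufacture explicit test covers of simple finite-type $S$-schemes whose liftability encodes the condition $(\ast)$. The prototype is the Zariski case: the cover $\{D(t), D(t-1)\}$ of $\AA^1_S = \Spec(\OO_S[t])$ lifts against $P$ for every choice of $t \mapsto f \in R$ if and only if $f$ or $1-f$ is a unit for all $f \in R$, i.e. $R$ is local. The henselian and strictly henselian conditions for the Nisnevich and étale rows fall out of the analogous test using distinguished Nisnevich squares and standard étale neighbourhoods. For the remaining valuative topologies I would use blow-up covers to force the valuation-ring clause and finite surjective covers — for instance those trivialising a given monic polynomial — to force the (absolute) integral closure clause, reading each clause of $(\ast)$ off a tailored cover.

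For the ``if'' direction I would assume $(\ast)$ and verify the lifting property generator by generator, in each case translating the geometric lifting into the defining feature of the corresponding class of rings. Henselian local rings lift against the étale part of Nisnevich/étale covers by the very property recalled in the Introduction; valuation rings lift against proper covers by the valuative criterion of properness applied at the generic point; and the defining splitting property of absolutely integrally closed rings makes finite surjective covers acquire sections. Combining the relevant features according to the generating set of $\tau$, and propagating the liftings back up to an arbitrary $\tau$-cover via the refinement and limit arguments of the first paragraph, yields $(\Sch / S, \tau)$-locality.

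The main obstacle I anticipate lies with the finest topologies ($\cdh$, $\eh$, $\ldh$, $\qfh$, h), where a cover is a composite of a proper $\cdp$ part and a finite flat / finite surjective part, so the two halves of the argument must be run simultaneously and shown not to interfere. Extracting the valuation-ring structure purely from liftability against blow-ups is delicate: given $a,b \in R$ one must produce a finite-type model together with a blow-up on which the ideal $(a,b)$ becomes principal, then use the guaranteed section to compare $a$ and $b$, and iterate coherently over all pairs. Interlacing this with the finite-cover arguments that yield henselianity and absolute integral closure, while keeping every construction inside the finite-type limit framework and handling the reduced-versus-nonreduced bookkeeping that separates the $\rh$/$\cdh$/$\eh$ rows from the $\qfh$/h rows, is where I expect the real difficulty to concentrate.
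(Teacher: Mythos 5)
Your overall two--direction strategy (unwind Definition~\ref{defi:local} into a lifting property, test it against generating covers, and use the limit formalism of \cite[\S 8]{EGAIV3}) is sound, and for the individual rows of Table~\ref{tabl:local} it is essentially how those characterisations are established in the literature; note that the paper itself does not reprove them, but cites \cite{GL} for $\Zar$, $\rh$, $\htop$, calls $\Nis$ and $\Et$ classical, and only proves the $\fin$, $\cdf$, $\cl$ cases from scratch (Lemma~\ref{lemm:fLocalSchemes}). The genuine gap in your proposal sits exactly where you say you ``expect the real difficulty to concentrate'': the topologies defined as joins ($\rh$, $\cdh$, $\ldh$, $\eh$, $\qfh$, $\htop$), whose covers are only generated by, not equal to, your lists of distinguished covers. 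Your plan to run the proper and finite halves ``simultaneously and show they do not interfere'' is not an argument, and no interleaving is needed. The missing observation is formal: for any two topologies $\sigma, \rho$ on $\Sch / S$, a scheme $P$ is $(\Sch/S, \langle \sigma, \rho\rangle)$-local if and only if it is both $(\Sch/S,\sigma)$-local and $(\Sch/S,\rho)$-local. Indeed, the collection $J_P$ of sieves $R$ on objects $X \in \Sch/S$ such that every $S$-morphism $P \to X$ factors through a member of $R$ is itself a Grothendieck topology (pullback stability uses fibre products in $\Sch/S$, local character is a two-step factorisation), and ``$P$ is $\tau$-local'' just says $\tau \subseteq J_P$; since $\langle \sigma,\rho\rangle$ is the smallest topology containing $\sigma$ and $\rho$, it lies in $J_P$ as soon as $\sigma$ and $\rho$ do. (Equivalently: finite compositions of generating families form a pretopology defining the join, and a lift against a composition is built stage by stage.) With this one remark, $\qfh$ follows from $\Et$ plus $\fin$, $\cdh$ from $\Nis$ plus $\rh$, $\eh$ from $\Et$ plus $\rh$, and $\ldh$ from $\cdh$ plus $\fpsl$; without it, your proposal has no proof for those rows, and even your treatment of $\rh$ and $\htop$ is incomplete, since lifting against their arbitrary covers has not been reduced to lifting against blow-ups, Zariski covers, and proper surjections.

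Two secondary points. First, for $\tau = \fin$ your phrase ``the defining splitting property of \aic{} rings makes finite surjective covers acquire sections'' hides the actual argument: given $P \to X$ and a finite surjective family $\{U_i \to X\}$, one lifts the generic point $\eta$ of $P$ (the spectrum of an algebraically closed field) into some $P \times_X U_i$, and then the closure of the image of $\eta$ maps to $P$ by a finite birational morphism of integral schemes with normal target, hence isomorphically; monic-polynomial splitting alone does not produce the section. Conversely, your ``only if'' direction for $\fin$ and $\cdf$ (locality forces \aic{}, resp.\ normality) needs the normalization of a finite-type model, or its normalization in a finite extension of its function field, to be written as a cofiltered limit of finite (birational) covers so that the lifting property applies at finite levels (\cite[Tag 0817]{Sta}); your sketch never mentions this step. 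Second, the ``reduced-versus-nonreduced bookkeeping'' you anticipate is a non-issue for this theorem: integrality in all of the $\rh$, $\cdh$, $\eh$, $\qfh$, $\htop$ rows comes from testing against covers by irreducible components, and the rows differ only in which additional covers one must lift against.
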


\begin{table}
\begin{center}
\begin{tabular}{cc}
$\tau$ & ($\ast$) \\ \hline
Zariski & local \\
Nisnevich & local henselian \\
étale & local strictly henselian \\
closed & integral\\
cdf & integral and normal \\
finite & integral and \aic{} (Definition~\ref{defi:aic}) \\
qfh & local, integral, and \aic{} (Definition~\ref{defi:aic}) \\
rh & a valuation ring  \\
cdh & a henselian valuation ring \\
ldh & a henselian valuation ring whose fraction field has no non-trivial \\
&  finite extensions of degree prime to $l$ (cf. \cite[Def.\ 6.10]{Dat12}). \\
eh & a strictly henselian valuation ring \\
h & an \aic{} valuation ring (Definition~\ref{defi:aic}) \\ \hline
\end{tabular}
\caption{$(\Sch / S, \tau)$-local schemes. The words ``$\Spec$ of'' have been omitted from the last five.}
\label{tabl:local}
\end{center}
\end{table}

The descriptions for Zariski, Nisnevich, étale, rh, and h are already in the literature (we give references in the main text). The cases $\tau = $ cdh, ldh, and eh are immediate corollaries of these. The description for $\tau = $ finite appears to be new; the qfh case is an immediate consequence of this case. We remark that the cases qfh, rh, cdh, eh, and h of this theorem were observed by the %
first author 
 at Oberwolfach in August 2002 but this observation never appeared in print.

The reader will notice that the $\fppf$-topology is missing from Table~\ref{tabl:local}. We make some remarks about this in Section~\ref{sect:finite}. The reader can also consult \cite{Sch14} for some interesting results about schemes local for the $\fppf$-topology on the category of \emph{all} schemes (with no noetherian/quasi-compact/quasi-separated hypotheses) which overlap with some of our lemmas. Whereas our goal is to obtain an algebro-geometric description of a conservative family of fibre functors, the goal of \emph{ibid.} is to describe the topological space associated to the $\fppf$-topos of a scheme.

In Section~\ref{sect:directImage}, as an example of an application of Theorem~\ref{theo:equi} we show that the direct image between abelian sheaves along a closed immersion is exact for a number of topologies.

The second author's 
original motivation for thinking about Theorem~\ref{theo:equi} was the hope of finding a shortcut to the main result of \cite[Chapter 3]{Kel12}. In the end, due to non-noetherian rings being so much more complicated than noetherian ones, all %
he 
got was alternative proofs of some statements without any noticeable reduction in length (cf. Lemma~\ref{lemm:nonDiscrete} for the need to use non-noetherian rings).

\subsubsection*{Acknowledgements}

\thanks{ 
The second author 
was a post-doc at the University of Duisburg-Essen under Marc Levine's supervision when the majority of this article and its previous versions were written, and 
thanks %
him for the wonderful atmosphere in his work group, interesting conversations, and the support of the Alexander von Humboldt Foundation, and the DFG through the SFB Transregio 45.

The second author also thanks 
 Aise Johan de Jong for comments on a very early version, 
and the mathematicians who encouraged him to put this material in print despite the fact that much of it is already known to some.
}

\section{Points of locally coherent topoi}

In this section we recall Deligne's theorem (Theorem~\ref{theo:deligne}) on the existence of a conservative family of fibre functors. We also recall the equivalence between the category of fibre functors, and a subcategory of pro-objects of the underlying site (Proposition~\ref{prop:proPoints}). The material in this section is well-known to topos theorists.

Recall that a \emph{fibre functor} of a category $\cS$ is a functor $\cS \to \Set$ towards the category of sets which preserves finite limits and small colimits. By category of fibre functors of $\cS$, we mean the full subcategory of the category of functors from $\cS$ to $\Set$ whose objects are fibre functors. We will write $\Fib(\cS)$ for the category of fibre functors \cite[Def.\ IV.6.2]{SGA41}. If $\cS$ is a category of sheaves, the category of \emph{points} of $\cS$ is by definition $\Fib(\cS)^\textrm{op}$ \cite[Def.\ IV.6.1]{SGA41}.

Recall as well, that a \emph{pro-object} of a category $\C$ is a (covariant) functor $P_\bullet: \Lambda \to \C$ from a cofiltered%
\footnote{By \emph{cofiltered} we mean a small category $\Lambda$ that is non-empty, for every pair of objects $i, j$ there exists an object $k$ and morphisms $k \to i, k \to j$, and for every pair of parallel morphisms $i \rightrightarrows j$ there exists a morphism $k \to i$ such that the two compositions are equal \cite[Def.\ I.2.7]{SGA41}.} %
category. The pro-objects of a category $\C$ are the objects of a category $\Pro(\C)$ \cite[Equation I.8.10.5]{SGA41} where one defines
\[ \hom_{\Pro(\C)} \biggl ( (\Lambda \stackrel{P_\bullet}{\to} \C), (\Lambda' \stackrel{P_\bullet'}{\to} \C) \biggr ) = \varprojlim_{\lambda' \in \Lambda'} \varinjlim_{\lambda \in \Lambda} \hom_\C(P_\lambda, P_{\lambda'}'). \]
There is an obvious fully faithful functor $\C \to \Pro(\C)$ which sends an object $X \in \C$ to the constant pro-object $X: \ast \to \C$ with value $X$, where $\ast$ is the category with one morphism. Sometimes the category $\C$ is considered as a subcategory of $\Pro(\C)$ in this way.

Following Suslin and Voevodsky (and contrary to Artin and therefore Milne), we use the terms \emph{topology} and \emph{covering} as in \cite[Def.\ II.1.1]{SGA41} and \cite[Def.\ II.1.2]{SGA41} respectively. In particular, we have the following two properties.

\begin{lemm} \label{lemm:topologyConventions}
Suppose that $\C$ is equipped with a topology $\tau$. 
\begin{enumerate}
 \item \label{lemm:topologyConventions:refine} If a family of morphisms admits a refinement by a $\tau$-covering, then that family itself is also a covering \cite[Prop. II.1.4]{SGA41}.
 \item \label{lemm:topologyConventions:fibreDetects} Suppose $\C$ is essentially small and that $\Shv_\tau(\C)$ admits a conservative family of fibre functors $\{ \phi_j \}_{j \in J}$. Then a set of morphisms $\{p_i: U_i \to X \}_{i \in I}$ is a $\tau$-covering family if and only if for each $j \in J$ the family $\{\phi_j (p_i) \}_{i \in I}$ is a jointly surjective family of morphisms of sets \cite[Thm.\ II.4.4]{SGA41}.
\end{enumerate}
\end{lemm}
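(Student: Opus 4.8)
The plan is to deduce both parts from the formal axioms of a Grothendieck topology together with the exactness of fibre functors, thereby recovering \cite[Prop.\ II.1.4, Thm.\ II.4.4]{SGA41}. Throughout I would translate statements about covering \emph{families} into statements about the covering \emph{sieves} they generate, since the topology is most cleanly axiomatized on sieves.

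For part \ref{lemm:topologyConventions:refine}, let $R$ be the sieve on $X$ generated by the given family $\{U_i \to X\}$ and let $S$ be the sieve generated by a refining $\tau$-covering. Because every member of the refinement factors through some $U_i \to X$, we have $S \subseteq R$. The one fact I need is the monotonicity of covering sieves: if $S \subseteq R$ and $S$ is covering, then $R$ is covering. I would prove this from the local-character (transitivity) axiom applied to the covering sieve $S$: for every morphism $f \colon Y \to X$ lying in $S \subseteq R$, the pullback sieve $f^\ast R$ contains $\id_Y$ and is hence the maximal sieve, which is covering; transitivity then forces $R$ itself to be covering. This gives the claim.

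For part \ref{lemm:topologyConventions:fibreDetects} I would pass to associated sheaves. Writing $a$ for sheafification, the basic dictionary is that $\{p_i \colon U_i \to X\}$ is a $\tau$-covering if and only if the induced map $q \colon \coprod_i a(U_i) \to a(X)$ is an epimorphism in $\Shv_\tau(\C)$, because $\mathrm{im}(q)$ is exactly the sheafification of the sieve generated by the $p_i$. The easy direction is then immediate: each $\phi_j$ is exact (it preserves finite limits and all small colimits), so it commutes with coproducts and sends epimorphisms to surjections; applying $\phi_j$ to $q$ and using $\phi_j(\coprod_i a(U_i)) = \coprod_i \phi_j(U_i)$ shows that if $\{p_i\}$ covers then $\{\phi_j(p_i)\}_i$ is jointly surjective for every $j$.

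For the converse, suppose $\{\phi_j(p_i)\}_i$ is jointly surjective for all $j$, and factor $q$ as $\coprod_i a(U_i) \twoheadrightarrow \mathrm{im}(q) \hookrightarrow a(X)$. Exactness of $\phi_j$ preserves this factorization, so $\phi_j(\mathrm{im}(q)) = \mathrm{im}(\phi_j(q))$; joint surjectivity says this equals $\phi_j(a(X))$, whence the monomorphism $\phi_j(\mathrm{im}(q) \hookrightarrow a(X))$ is surjective, hence an isomorphism. Since the family $\{\phi_j\}$ is conservative, $\mathrm{im}(q) \hookrightarrow a(X)$ is an isomorphism, i.e. $q$ is an epimorphism, so $\{p_i\}$ is a covering. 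The main obstacle is precisely this converse: it is where I must use \emph{both} the exactness of the $\phi_j$ (to detect images and epimorphisms) and their \emph{conservativity} (to descend the resulting isomorphism back to $\Shv_\tau(\C)$), and where the hypothesis that $\C$ is essentially small is needed so that $\Shv_\tau(\C)$ is a genuine topos in which sheafification is exact and the epimorphism dictionary is available.
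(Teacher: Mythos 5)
The paper gives no proof of this lemma at all: both parts are quoted from SGA4 (\cite[Prop.~II.1.4]{SGA41} and \cite[Thm.~II.4.4]{SGA41}), so your proposal is best judged as a reconstruction of those cited results rather than against an argument in the text. Your part~(1) is a complete and correct such reconstruction: since a family is covering exactly when the sieve it generates is covering, the refinement statement reduces to monotonicity of covering sieves, and your proof of monotonicity (for $f \colon Y \to X$ in the covering sieve $S \subseteq R$, the pullback $f^{\ast}R$ contains $\mathrm{id}_Y$, hence is the maximal sieve, hence covering, so the local-character axiom applies to $R$) is the standard one.

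Part~(2) is structurally correct as well --- exactness of each $\phi_j$ gives preservation of coproducts, monos, epis and image factorizations, and conservativity of the family upgrades the resulting bijections $\phi_j(\mathrm{im}(q)) \to \phi_j(a(X))$ to an isomorphism $\mathrm{im}(q) \to a(X)$ --- but you should be aware that essentially all of the content of the statement is concentrated in what you call the ``basic dictionary'': that $\{p_i\}$ is covering if and only if $q \colon \coprod_i a(U_i) \to a(X)$ is an epimorphism of sheaves. Your one-line justification (that $\mathrm{im}(q) = a(R)$, where $R$ is the sieve generated by the $p_i$) only reduces this to the claim that $R$ is covering if and only if $a(R) \to a(h_X)$ is an isomorphism. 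The ``only if'' direction is the sheaf condition plus Yoneda, but the ``if'' direction --- an epimorphic family is covering --- is genuinely nontrivial under SGA4's sieve-theoretic definition of a topology, and it is precisely the substance of the theorem \cite[Thm.~II.4.4]{SGA41} that the paper cites; as written, your converse implicitly assumes the result it is meant to prove. The gap is closable by the same device you used in part~(1): a section of $a(R)(X)$ lifting the image of $\mathrm{id}_X$ is represented by local data exhibiting, for every $f$ in some covering sieve $S$ of $X$, that $f^{\ast}R$ contains a covering sieve and hence is covering (by monotonicity), whence $R$ is covering by the local-character axiom. With that step either proved in this way or explicitly cited as the SGA4 theorem, your argument is a correct, self-contained replacement for the paper's citations.
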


\begin{defi} \label{defi:proLocal}
Suppose that $\C$ is equipped with a topology $\tau$. Say that a pro-object $\Lambda \stackrel{P_\bullet}{\to} \C$ is \emph{$\tau$-local} if for every $X \in \C$ and every $\tau$-covering family $\{U_i \to X\}_{i \in I}$ the morphism 
\[ \amalg_{i \in I} \hom_{\Pro(\C)}(P_\lambda, U_i) \to \hom_{\Pro(\C)}(P_\lambda, X) \]
is surjective. Write $\Pro_\tau(\C)$ for the full subcategory of $\tau$-local pro-objects in $\Pro(\C)$.
\end{defi}

\begin{rema}
Rather than the surjectivity condition of Definition~\ref{defi:local} one is tempted to use a condition like ``every $\tau$-cover of $P$ admits a section''. We have avoided this because some topologies (such as the cdh and h for example) have multiple equivalent definitions, which are no longer equivalent when working with non-noetherian schemes (cf. \cite[Example 4.5]{GL}). As the schemes $P$ are often non-noetherian, we have chosen this statement to avoid the choice of non-noetherian versions of these topologies.
\end{rema}

\begin{prop}[{\cite[Prop. 7.13]{Joh77}}] \label{prop:proPoints}
Suppose that $\C$ is a small category that admits finite limits and is equipped with a topology $\tau$. Then the functor $\Pro(\C) \to \Fib(\PreShv(\C))^{op}$ which sends a pro-object $\Lambda \stackrel{P_\bullet}{\to} \C$ to the functor $F \mapsto \varinjlim_{\lambda \in \Lambda} F(P_\lambda)$ induces an equivalence of categories.
\begin{equation} \label{equa:equivOne}
\Pro_\tau(\C) \to \Fib(\Shv(\C))^{op}
\end{equation}
\end{prop}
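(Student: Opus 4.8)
The plan is to deduce the statement from the corresponding (well-known) statement for presheaves, and then to cut down to sheaves by matching the sheaf-theoretic ``continuity'' of a fibre functor with the combinatorial condition of Definition~\ref{defi:proLocal}. Throughout write $\phi_P$ for the functor $F \mapsto \varinjlim_\lambda F(P_\lambda)$ attached to a pro-object $P_\bullet \colon \Lambda \to \C$. First I would check that $P \mapsto \phi_P$ is a well-defined functor $\Pro(\C) \to \Fib(\PreShv(\C))^{op}$. Since limits and colimits of presheaves are computed objectwise, each evaluation $F \mapsto F(P_\lambda)$ preserves all limits and all colimits; as $\Lambda$ is cofiltered the diagram $\lambda \mapsto F(P_\lambda)$ is filtered, and filtered colimits of sets commute with finite limits and with all colimits. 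Hence $\phi_P$ preserves finite limits and small colimits, i.e.\ it is a fibre functor. Functoriality is routine, the contravariance (a morphism $P \to Q$ of pro-objects inducing $\phi_Q \to \phi_P$) being exactly what is recorded by the $\mathrm{op}$.

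Next I would prove that this functor is an equivalence onto all of $\Fib(\PreShv(\C))^{op}$. A fibre functor $\phi$ preserves small colimits, and every presheaf is the canonical colimit of the representables over it, so $\phi$ is determined by its restriction $\hat\phi \colon X \mapsto \phi(h_X)$ along the Yoneda embedding; moreover $\phi$ preserves finite limits if and only if $\hat\phi \colon \C \to \Set$ is flat. Because $\C$ has finite limits, the flat functors are precisely the filtered colimits of corepresentables, that is the functors $X \mapsto \varinjlim_\lambda \hom_\C(P_\lambda, X) = \hom_{\Pro(\C)}(P, X)$; this is the classical identification of the points of a presheaf topos with $\Pro(\C)$, and under it $\hat\phi$ is recovered from its (cofiltered) category of elements. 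The comparison with $\phi_P$ is immediate, since $\phi_P(h_X) = \varinjlim_\lambda h_X(P_\lambda) = \hom_{\Pro(\C)}(P, X)$.

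Finally I would restrict to $\tau$-sheaves, which is where the topological content lies. The sheafification $a \colon \PreShv(\C) \to \Shv_\tau(\C)$ is the inverse image part of a geometric morphism, so precomposition with $a$ realises $\Fib(\Shv_\tau(\C))$ as the full subcategory of $\Fib(\PreShv(\C))$ consisting of the \emph{$\tau$-continuous} fibre functors: those $\phi$ for which every $\tau$-covering family $\{U_i \to X\}_{i \in I}$ is sent to a jointly surjective family. Because covering families generate covering sieves and $\phi$ is left exact and cocontinuous, this continuity can be tested on the generating representables, so for $\phi = \phi_P$ it says exactly that
\[ \amalg_{i \in I} \hom_{\Pro(\C)}(P, U_i) \to \hom_{\Pro(\C)}(P, X) \]
is surjective, which is verbatim the condition that $P$ be $\tau$-local in Definition~\ref{defi:proLocal}. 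Hence the equivalence of the previous step restricts to the desired equivalence $\Pro_\tau(\C) \to \Fib(\Shv_\tau(\C))^{op}$ of (\ref{equa:equivOne}).

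The genuinely non-formal input is the presheaf-level equivalence of the second step, namely that every finite-limit-preserving cocontinuous functor out of $\PreShv(\C)$ arises from a pro-object; this rests on the identification of flat functors with pro-objects, valid precisely because $\C$ is finitely complete, and is the part I would cite rather than reprove. The remaining work is bookkeeping, the only point needing care being the verification that $\tau$-continuity may be tested on representables, so that it coincides \emph{on the nose} with Definition~\ref{defi:proLocal}.
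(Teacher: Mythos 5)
Your proposal is correct and follows essentially the same route as the paper's source for this result: the paper simply cites Johnstone and sketches the quasi-inverse as the cofiltered category of elements $(\ast \downarrow \phi) \to \C$, which is exactly the construction appearing in your second step (points of the presheaf topos as flat functors, i.e.\ pro-objects, recovered from the category of elements), followed by the standard restriction to $\tau$-continuous fibre functors, which matches Definition~\ref{defi:proLocal} verbatim. The only substantive input you defer to the literature (the identification of left-exact cocontinuous functors on $\PreShv(\C)$ with pro-objects when $\C$ has finite limits) is precisely what the paper's citation covers, so nothing is missing.
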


\begin{rema}
In fact, the hypothesis that in Proposition~\ref{prop:proPoints} that $\C$ admits finite limits is not necessary.
\end{rema}

A quasi-inverse $\Fib(\Shv(\C))^{op} \to \Pro_\tau(\C)$ (i.e., an inverse up to natural isomorphism) is given as follows. For any functor $\Shv_\tau(\C) \stackrel{\phi}{\to} \Set$, let $(\ast \downarrow \phi)$ be the category whose objects are pairs $(X, s)$ with $X \in \C$, $s \in \phi(X)$ (where we identify $X$ with the $\tau$-sheafification of the presheaf it represents). The morphisms $(X, s) \to (Y, t)$ are those morphisms $f: X \to Y$ such that $\phi(f)(s) = t$. One can check that when $\phi$ is a fibre functor, $(\ast \downarrow \phi)$ is cofiltered, and therefore the canonical projection $(\ast \downarrow \phi) \to \C$ is a pro-object, and is in fact the $\tau$-local pro-object corresponding to $\phi$. Informative examples include the case where $(\C, \tau)$ is the site associated to a classical topological space, or the small étale (or Zariski) site of a noetherian scheme.


\begin{theo}[Deligne {\cite[Prop. VI.9.0]{SGA42} or \cite[Thm.\ 7.44, Cor.\ 7.17]{Joh77}}] \label{theo:deligne}
Suppose that $\C$ is a small category in which fibre products are representable, and is equipped with a topology $\tau$ such that every covering family of every object admits a finite subfamily which is still a covering family. 

Then a morphism $f$ in $\Shv_\tau(\C)$ is an isomorphism if and only if $\phi(f)$ is an isomorphism for every fibre functor $\phi \in \Fib(\Shv_\tau(\C))$. In fact, there exists a (proper) set of fibre functors which is still a conservative family.
\end{theo}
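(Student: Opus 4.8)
The plan is to establish the two directions separately, the substantive content being the ``if'' direction together with the set-theoretic refinement. The ``only if'' direction is immediate, since any functor preserves isomorphisms. For the converse I would first carry out a standard topos-theoretic reduction. Recall that a morphism $f \colon F \to G$ in $\Shv_\tau(\C)$ is an isomorphism if and only if it is simultaneously a monomorphism and an epimorphism, and that a fibre functor $\phi$, preserving finite limits and all small colimits, preserves the image factorisation and the kernel pair of $f$; in particular $\phi(\mathrm{Im}\, f) = \mathrm{Im}(\phi f)$ and $\phi(F \times_G F) = \phi(F) \times_{\phi(G)} \phi(F)$. From this one checks that a family $\{\phi_j\}$ of fibre functors is conservative as soon as it \emph{separates subobjects}, meaning that for every object $Y$ and every proper subobject $B \subsetneq Y$ there is some $j$ with $\phi_j(B) \subsetneq \phi_j(Y)$: applying the separation property to $\mathrm{Im}(f) \subseteq G$ forces $f$ to be epi, and to the diagonal $\Delta_F \subseteq F \times_G F$ forces $f$ to be mono. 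Since the sheafifications of representables generate $\Shv_\tau(\C)$ and fibre functors preserve pullbacks, it then suffices to separate proper subobjects $A \subsetneq X$ of representable $X$, replacing a general $B \subsetneq Y$ by the pullback $g^{-1}(B) \subsetneq X$ along a map $g \colon X \to Y$ from a generator that does not factor through $B$.

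Fix then a representable $X \in \C$ and a proper subsheaf $A \subsetneq X$; the task is to produce one fibre functor $\phi$ with $\phi(A) \subsetneq \phi(X)$. By Proposition~\ref{prop:proPoints} this is the same as producing a $\tau$-local pro-object $P_\bullet \colon \Lambda \to \C$ equipped with a section $s \in \varinjlim_\lambda X(P_\lambda)$ that does not lie in $\varinjlim_\lambda A(P_\lambda)$. I would build $P_\bullet$ as a cofiltered diagram valued in $\C$, beginning with $P_0 = X$ and its identity section, and maintaining throughout the invariant $(\dagger)$ that the structural morphism $P_\lambda \to X$ does not factor through $A$. The diagram is enlarged by a transfinite process indexed by ``tasks'', where a task consists of a $\tau$-cover $\{U_i \to Y\}_{i \in I}$ in $\C$ together with a morphism $P_{\lambda_0} \to Y$ out of some stage already constructed (every morphism out of a pro-object factors through a single stage). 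To process a task I pull the cover back to the $\tau$-cover $\{P_{\lambda_0} \times_Y U_i \to P_{\lambda_0}\}_{i \in I}$---using that fibre products in $\C$ are representable and that covers are stable under base change---and adjoin one member $P_{\lambda_0} \times_Y U_i$ as a new stage. Running this until every task over the accumulated diagram has been addressed, and taking cofiltered limits at limit ordinals, produces a pro-object $P_\bullet$ that is $\tau$-local by construction.

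The crux is the consistency step: at each task some member $P_{\lambda_0} \times_Y U_i$ can be adjoined without destroying $(\dagger)$. The hard part of the whole argument is exactly here, and it is where the finiteness hypothesis is used as a compactness principle. Indeed, suppose to the contrary that for every $i$ the composite $P_{\lambda_0} \times_Y U_i \to X$ factored through $A$. By hypothesis the pulled-back cover admits a finite subcover $\{P_{\lambda_0} \times_Y U_{i_k} \to P_{\lambda_0}\}_{k=1}^n$; since $A \hookrightarrow X$ is a monomorphism of sheaves, membership of a section in $A$ is local on covers, so the section $P_{\lambda_0} \to X$ would itself factor through $A$, contradicting $(\dagger)$. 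Were arbitrary (non-finitely-refinable) covers allowed, this local-to-global step would fail and the section could be driven into $A$ only in the limit, so the finite-subcover assumption is genuinely essential.

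Finally, for the assertion that a genuine set of fibre functors suffices, I would observe that the entire construction stays inside the small category $\C$: each $P_\lambda$ is an object of $\C$, so the pro-objects produced are cofiltered diagrams valued in $\C$ and form a set. Since $\Shv_\tau(\C)$ is well-powered, the proper subobjects of each representable form a set, and indexing one separating point over each admissible pair $(X, A)$ yields a set of fibre functors that separates subobjects, hence---by the reduction of the first paragraph---is conservative. The principal technical obstacles I anticipate, beyond the consistency step above, are organising the transfinite bookkeeping so that tasks created by later stages are also eventually processed (a closure-ordinal argument) while keeping each limit stage cofiltered, and pinning down the sheaf-theoretic claim that membership in $A$ is detected on finite covers.
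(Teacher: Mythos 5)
The paper gives no proof of this theorem (it is quoted from SGA~4 and Johnstone), so your proposal must be measured against the classical argument, whose general shape --- reduce conservativity to separating proper subobjects of representables, then build a $\tau$-local pro-object carrying a section of $X$ kept out of $A$ --- you reproduce correctly. But there is a genuine gap, and it sits exactly where you claim the ``crux'' is. Your successor-step consistency argument never actually uses the finiteness hypothesis: membership of a section in a subsheaf $A$ of a sheaf is local with respect to \emph{arbitrary} covering families (the restrictions of $s$ to the members of the cover form a compatible family of sections of $A$, which glue in the sheaf $A$, and the glued section equals $s$ because the ambient sheaf is separated), so the finite subcover you extract does no work, and your parenthetical claim that ``were arbitrary covers allowed, this local-to-global step would fail'' is false. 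This is not a pedantic point: if your outline were complete as written, it would prove that every topos of sheaves on a small site with pullbacks has enough points, which is false. For $\C$ an atomless complete Boolean algebra $B$ (a poset, hence with pullbacks) with the topology in which $\{b_i \le b\}_{i \in I}$ covers $b$ iff $\bigvee_{i} b_i = b$, the topos $\Shv(\C)$ is nontrivial but has no fibre functors at all, since a fibre functor would yield a completely prime filter on $B$, i.e.\ an atom.

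The real use of coherence is hidden in the step you defer as ``transfinite bookkeeping''. Since each new stage creates set-many new tasks, the recursion must be transfinite, and $\C$ has no cofiltered limits, so ``taking cofiltered limits at limit ordinals'' is not available: at a limit ordinal, to keep the index diagram cofiltered while processing a task $(\{U_i \to Y\}_{i \in I},\, P_{\lambda_0} \to Y)$, you must adjoin the pullbacks $P_\gamma \times_Y U_i$ along an entire cofinal tail of already-constructed stages $\gamma$, and the invariant $(\dagger)$ must hold on all of them simultaneously --- for a \emph{single} choice of $i$. For each $\gamma$ the set $I_\gamma = \{\, i \in I : s \notin A(P_\gamma \times_Y U_i) \,\}$ is nonempty by your one-step argument, and $I_{\gamma'} \subseteq I_\gamma$ when $\gamma'$ is deeper in the tail; what is needed is an element of $\bigcap_\gamma I_\gamma$. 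Here, and only here, does the hypothesis enter: after refining to a finite subcover, $I$ is finite, and a decreasing directed family of nonempty subsets of a finite set has nonempty intersection. With infinite covers the intersection can be empty --- in the Boolean-algebra example each individual member of a cover is eventually cut to $0$ against the descending chain even though at every stage some member survives, which is precisely why that topos has no points. So the compactness principle you correctly sense is needed is real, but it lives at the limit stages of the recursion, not in the one-step locality argument; as written, your proof both misattributes the role of the finiteness hypothesis and omits the step where it is indispensable.
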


\begin{rema} \label{rema:quasiCompactObject}
The statement in \cite{Joh77} is actually less general, in that it assumes that $\C$ admits all finite limits. We have included the reference however because the proof is easier to read.
\end{rema}

\begin{rema}
The topologies in Definition~\ref{defi:topologies} satisfy the condition that every covering family is refinable by one with a finite index set. Hence, when $S$ is a separated noetherian scheme, Theorem~\ref{theo:deligne} applies to the category $\Sch / S$ equipped with any of these topologies.
\end{rema}

\section{Points of algebro-geometric categories of sheaves}

In this section we recall various topologies on the category of schemes of finite type over a separated noetherian base scheme, and give a geometric description the fibre functors for some of these sites.

Let $\Aff / S$ denote the category of affine $S$-schemes of finite type. That is, the category of $S$-schemes of finite type whose structural morphism is an affine morphism. The material in \cite[\S 8]{EGAIV3} allows us to replace pro-objects by honest schemes. The following is a direct consequence of the definitions and \cite[Prop. 8.13.5]{EGAIV3}.

\begin{prop} \label{prop:equiLimAff}
Suppose that $S$ is a separated noetherian scheme and $\alpha$ a topology on $\Aff / S$. The functor which sends a pro-object $\Lambda \stackrel{P_\bullet}{\to} \Aff / S$ to $\varprojlim_{\lambda \in \Lambda} P_\lambda$ induces an equivalence of categories 
\begin{equation} \label{equa:equiAff}
\Pro_{\alpha}(\Aff / S)
\cong
\left \{ \begin{array}{cc} 
(\Aff / S, \alpha)\textrm{-local }  \\ 
\textrm{affine } S\textrm{-schemes}
\end{array} \right  \}
\end{equation}
between $\Pro_{\alpha}(\Aff / S)$ (Definition~\ref{defi:proLocal}) and the category of $(\Aff / S, \alpha)$-local $S$-schemes with affine structural morphism (Definition~\ref{defi:local}).
\end{prop}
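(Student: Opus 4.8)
The plan is to exhibit the functor $P_\bullet \mapsto \varprojlim_\lambda P_\lambda$ and its (putative) quasi-inverse, then verify they are mutually inverse by combining the limit formalism of \cite[\S 8]{EGAIV3} with an unwinding of the two local-ness conditions. First I would record the key input: since $S$ is separated noetherian and each $P_\lambda \to S$ is affine of finite type, the cofiltered limit $P := \varprojlim_\lambda P_\lambda$ exists in the category of $S$-schemes (it is the relative $\Spec$ of the filtered colimit of the corresponding quasi-coherent $\OO_S$-algebras), and its structural morphism $P \to S$ is affine. Moreover, by \cite[Prop. 8.13.5]{EGAIV3} (more precisely the package of limit results in \emph{loc.\ cit.}), for any finite-type $S$-scheme $X$ the canonical map
\[ \varinjlim_{\lambda \in \Lambda} \hom_S(P_\lambda, X) \to \hom_S(P, X) \]
is a bijection. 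This is the technical heart that lets us pass back and forth between the pro-object and the honest scheme.

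Next I would check that the functor is well-defined on the indicated subcategories, i.e.\ that if $P_\bullet$ is $\alpha$-local then $P = \varprojlim_\lambda P_\lambda$ is $(\Aff/S, \alpha)$-local. Given an $\alpha$-covering $\{U_i \to X\}_{i\in I}$ in $\Aff/S$ and a morphism $P \to X$, I would use the bijection above to lift $P \to X$ to some $P_\lambda \to X$; the $\alpha$-locality of $P_\bullet$ (Definition~\ref{defi:proLocal}) then factors a further restriction $P_{\lambda'} \to X$ through some $U_i$, and composing with $P \to P_{\lambda'}$ gives the desired factorization $P \to U_i \to X$. This shows surjectivity of $\amalg_i \hom_S(P, U_i) \to \hom_S(P, X)$, so $P$ is indeed local. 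Conversely, for the quasi-inverse direction I would observe that on fully faithfulness the Hom-formula defining $\Pro(\C)$ together with the $\varinjlim$-bijection identifies $\hom_{\Pro(\Aff/S)}(P_\bullet, Q_\bullet)$ with $\hom_S(\varprojlim P, \varprojlim Q)$ when the targets are finite-type (one reduces to the generators $Q_{\lambda'} \in \Aff/S$), giving full faithfulness of the functor.

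For essential surjectivity, given a $(\Aff/S, \alpha)$-local affine $S$-scheme $P$, I would present it as a cofiltered limit of finite-type affine $S$-schemes in the canonical way (write its $\OO_S$-algebra as the filtered colimit of its finite-type sub-$\OO_S$-algebras, indexed by $\Lambda$), and then verify that the resulting pro-object $P_\bullet$ is $\alpha$-local, again using the $\varinjlim$-bijection to transport the locality condition on $P$ back to the pro-object. The one point requiring care is that the indexing category of sub-algebras is filtered rather than merely directed and that the transition maps are affine of finite type, but this is exactly the setting of \cite[\S 8]{EGAIV3}. The main obstacle I anticipate is purely bookkeeping: matching the surjectivity formulation of $\alpha$-locality in Definitions~\ref{defi:local} and~\ref{defi:proLocal} across the bijection, since a factorization through $U_i$ exists only after passing to a sufficiently deep index $\lambda$, and one must check the compatibilities are uniform enough that the two notions of locality correspond exactly rather than merely implying one another. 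Since the bijection above is functorial in both $X$ and $P_\bullet$, these compatibilities are automatic, and the equivalence~\eqref{equa:equiAff} follows.
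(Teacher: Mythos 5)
Your proposal is correct and follows essentially the same route as the paper, which derives the equivalence directly from the definitions and \cite[Prop.\ 8.13.5]{EGAIV3}: the bijection $\varinjlim_\lambda \hom_S(P_\lambda, X) \cong \hom_S(\varprojlim_\lambda P_\lambda, X)$ for $X$ of finite type identifies the two locality conditions and gives full faithfulness, while your quasi-inverse (limit over finite-type sub-$\OO_S$-algebras) produces a pro-object isomorphic to the paper's factorization category $(P \backslash \Sch / S)$.
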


An explicit inverse is given as follows. For an $S$-scheme $P \to S$, define $(P \backslash \Sch / S)$ to be the category whose objects are factorizations $P \to X \to S$ with $X \in \Aff / S$ and morphisms are commutative diagrams $P {^\nearrow_\searrow} {\underset{X}{\stackrel{Y}{\downarrow}}} {^\searrow_\nearrow} S$. Since this category is co-filtered, projecting $P \to X \to S$ towards $X \to S$ gives a pro-object.

\begin{rema}
Note that the adjective ``affine'' is necessary if we want an equivalence of categories. For example, since $\hom_S(\Spec( f_* \OO_P ), X) = \hom_S(P, X)$ for any quasi-compact and quasi-separated $S$-scheme $P$ and any affine $S$-scheme $X$, the $S$-schemes $\Spec( f_* \OO_P )$ and $P$ determine the same pro-object of $\Aff / S$.
\end{rema}

Given a topology $\tau$ on $\Sch / S$ we will call the induced topology on $\Aff / S$ the affine $\tau$-topology or \emph{$\tau\aff$-topology}. That is, $\tau\aff$ is the finest topology on $\Aff / S$ such that the image in $\Sch / S$ of any $\tau\aff$-covering family is a $\tau$-covering family {\cite[Par. III.3.1, Cor. III.3.3]{SGA41}}.

\begin{theo} \label{theo:equi}
Suppose that $S$ is a separated noetherian scheme and $\tau$ a topology on $\Sch / S$ finer than the Zariski topology. The functor which sends a $(\Sch / S, \tau)$-local $S$-scheme $P \to S$ to the functor $\phi_P: F \mapsto \varinjlim_{(P \to X \to S)} F(X)$ (where the colimit is indexed by factorizations with $X \in \Sch / S$) induces an equivalence of categories:
\begin{equation} \label{equa:equi}
\left \{ \begin{array}{cc} 
(\Sch / S, \tau)\textrm{-local }  \\ 
\textrm{affine } S\textrm{-schemes}
\end{array} \right  \}
\cong
 \Fib \biggl ( \Shv_\tau(\Sch / S) \biggr ).
\end{equation}
If moreover, every covering family is refinable by one indexed by a finite set, then the family $\{ \phi_P \}$ indexed by $(\Sch / S, \tau)$-local $S$-schemes with affine structural morphism is a conservative family of fibre functors.
\end{theo}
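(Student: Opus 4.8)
The plan is to exhibit the functor $P \mapsto \phi_P$ as a composite of the three equivalences already assembled in the paper, bridging the categories built on $\Sch/S$ and on $\Aff/S$ by a comparison of topoi. Concretely I would realize the desired equivalence as
\begin{align*}
\{(\Sch/S,\tau)\text{-local affine}\}
&\cong \Pro_{\tau\aff}(\Aff/S)
\cong \Fib(\Shv_{\tau\aff}(\Aff/S))^{\mathrm{op}}\\
&\cong \Fib(\Shv_\tau(\Sch/S))^{\mathrm{op}},
\end{align*}
where the first arrow is the inverse of Proposition~\ref{prop:equiLimAff} with $\alpha=\tau\aff$, the second is Proposition~\ref{prop:proPoints} applied to the essentially small category $\Aff/S$ (which has finite limits, since affine morphisms are stable under base change and composition and $S$ is its terminal object), and the third comes from an equivalence of topoi $\Shv_{\tau\aff}(\Aff/S)\simeq\Shv_\tau(\Sch/S)$. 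Tracing an object through the chain shows it sends $P$ to $F\mapsto\varinjlim_{(P\backslash\Aff/S)}F$, which one then identifies with $\phi_P$ by cofinality of $(P\backslash\Aff/S)$ in $(P\backslash\Sch/S)$; the $(-)^{\mathrm{op}}$ appears exactly as in \eqref{equa:equivalences} because Proposition~\ref{prop:proPoints} is contravariant in the pro-object.

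The one input not yet recorded in the paper is the topos comparison, for which I would invoke the comparison lemma \cite[III.4]{SGA41}: the inclusion $\Aff/S\hookrightarrow\Sch/S$ is fully faithful, $\tau\aff$ is by construction the induced topology, and it remains to check that every $X\in\Sch/S$ admits a $\tau$-covering by objects of $\Aff/S$. This is where separatedness of $S$ enters. Covering $S$ by affine opens $V$ and each $X\times_S V$ by affine opens $U$, the composite $U\to V\hookrightarrow S$ is affine because $V\hookrightarrow S$ is affine (an intersection of two affine opens in a separated scheme is affine) and $U\to V$ is a morphism of affine schemes; since $\tau$ is finer than the Zariski topology, $\{U\to X\}$ is a $\tau$-covering of $X$ by objects of $\Aff/S$. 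The lemma then yields the restriction equivalence $\Shv_{\tau\aff}(\Aff/S)\simeq\Shv_\tau(\Sch/S)$.

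I expect the identification of $(\Sch/S,\tau)$-local with $(\Aff/S,\tau\aff)$-local \emph{affine} $S$-schemes to be the main obstacle, since the two definitions refer to different sites. One direction is formal: the image of a $\tau\aff$-covering is a $\tau$-covering and the relevant $\hom_S$-sets are insensitive to whether the target is viewed in $\Aff/S$ or in $\Sch/S$, so $(\Sch/S,\tau)$-locality implies $(\Aff/S,\tau\aff)$-locality. For the converse I must lift an arbitrary $g\colon P\to X$ along a $\tau$-covering $\{U_i\to X\}$ in $\Sch/S$, with $X$ not necessarily affine over $S$. Here I would use Proposition~\ref{prop:equiLimAff} to write $P=\varprojlim_{(P\backslash\Aff/S)}P_\lambda$ with affine transition maps; by the limit arguments of \cite[\S8]{EGAIV3} the map $g$ factors as $P\to P_\lambda\xrightarrow{g_\lambda}X$ for some $\lambda$, with $P_\lambda\in\Aff/S$. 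Pulling the covering back gives a $\tau$-covering $\{U_i\times_X P_\lambda\to P_\lambda\}$, which after refining each term by affine-over-$S$ opens (using transitivity of coverings) becomes a $\tau\aff$-covering $\{W_{im}\to P_\lambda\}$ with $W_{im}\to U_i$. Applying $(\Aff/S,\tau\aff)$-locality to $P\to P_\lambda$ produces a lift $P\to W_{im}$, and composing with $W_{im}\to U_i$ lifts $g$; the two projections out of the fibre product ensure the result still maps to $X$ as $g$. This is exactly the step where I would be most careful, as it is the only place the two locality notions must be reconciled.

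Finally, for conservativity I would verify that under the extra hypothesis Deligne's theorem applies. The category $\Sch/S$ is essentially small and has fibre products (a fibre product of separated finite-type $S$-schemes is again separated and of finite type). If every covering family is refinable by one indexed by a finite set, then for a covering $\{U_i\to X\}$ with finite refinement $\{V_k\to X\}$ the finite subfamily $\{U_{i(k)}\to X\}$ through which the $V_k$ factor is itself a covering by Lemma~\ref{lemm:topologyConventions}\eqref{lemm:topologyConventions:refine}; thus each covering admits a finite covering subfamily, the hypothesis of Theorem~\ref{theo:deligne}. Deligne's theorem then gives that $\Fib(\Shv_\tau(\Sch/S))$ is conservative, and since the equivalence \eqref{equa:equi} shows every fibre functor is isomorphic to some $\phi_P$ with $P$ a $(\Sch/S,\tau)$-local affine $S$-scheme, the family $\{\phi_P\}$ is conservative.
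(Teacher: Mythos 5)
Your proposal is correct and follows essentially the same route as the paper: the paper's proof is exactly the composite of the equivalence $\Shv_\tau(\Sch/S) \simeq \Shv_{\tau\aff}(\Aff/S)$, Proposition~\ref{prop:proPoints}, Proposition~\ref{prop:equiLimAff}, and the identification of $(\Sch/S,\tau)$-local with $(\Aff/S,\tau\aff)$-local affine $S$-schemes, with conservativity deduced from Theorem~\ref{theo:deligne}. The only difference is that you spell out the steps the paper leaves implicit (the comparison lemma for the topos equivalence, the limit argument reconciling the two locality notions, and the finite-subfamily verification of Deligne's hypothesis), all of which are carried out correctly.
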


\begin{rema}
It seems to be a non-trivial problem in general to show for a given topology $\tau$ on $\Sch / S$ that every covering family is refinable by one indexed by a finite set. For example, the Riemann-Zariski space \cite[\S 3]{GL} is used in \cite{GL} for this purpose for the h- and rh-topologies.
\end{rema}

\begin{proof}
If $\tau$ is a topology on $\Sch / S$ finer than the Zariski topology, then the canonical functor $\Shv_\tau(\Sch / S) \to \Shv_{\tau\aff}(\Aff / S)$ is an equivalence. %
Hence there is an equivalence of categories of fibre functors 
\[ \Fib(\Shv_\tau(\Sch / S)) \cong \Fib(\Shv_{\tau\aff}(\Aff / S)). \]
Now we have the equivalences
\[ \Fib(\Shv_{\tau\aff}(\Aff / S))^{op} \cong \Pro_{\tau\aff}(\Aff / S) \textrm{ and } \]
\[ \Pro_{\tau\aff}(\Aff / S) \cong 
\left \{ \begin{array}{cc} 
(\Aff / S, \tau\aff)\textrm{-local }  \\ 
\textrm{affine } S\textrm{-schemes} 
\end{array} \right  \}
\]
of Proposition~\ref{prop:proPoints} and Proposition~\ref{prop:equiLimAff}, and finally, we use again the fact that $\tau$ is finer than the Zariski topology to obtain the equivalence
\[ 
\left \{ \begin{array}{cc} 
(\Aff / S, \tau\aff)\textrm{-local }  \\ 
\textrm{affine } S\textrm{-schemes} 
\end{array} \right  \}
\cong 
\left \{ \begin{array}{cc} 
(\Sch / S, \tau)\textrm{-local }  \\ 
\textrm{affine } S\textrm{-schemes} 
\end{array} \right  \}. 
\]
The statement about a conservative family follows from Theorem~\ref{theo:deligne} of Deligne.
\end{proof}

Let us now consider specific topologies.

\begin{defi} \label{defi:topologies}
Let $S$ be a separated noetherian scheme. We consider the following topologies on $\Sch / S$. If $\sigma$ and $\rho$ are two topologies then we denote by $\langle \sigma, \rho \rangle$ the coarsest topology which is finer than both $\sigma$ and $\rho$.
\begin{enumerate}
 \item The \emph{Zariski topology (or $\Zar$)} is generated by finite families
\begin{equation} \label{equa:family}
\{ f_i: U_i \to X\}_{i = 1}^n
\end{equation}
which are jointly surjective (i.e., $\amalg U_i \to X$ is surjective on the underlying topological spaces) and such that each $U_i \to X$ is an open immersion. We allow all $n = 0$ so that the empty family is a covering family of the empty scheme.
 \item The \emph{Nisnevich topology (or $\Nis$)} is generated by completely decomposed families (\ref{equa:family}) with each $f_i$ an étale morphism. By \emph{completely decomposed} we mean that for each $x \in X$ there is an $i$ and a $u \in U_i$ such that $f_i(u) = x$ and $[k(u_i): k(x)] = 1$ (\cite{Nis89}).
\item The \emph{étale topology (or $\Et$)} is generated by families (\ref{equa:family}) which are jointly surjective and such that each $f_i$ is étale.
 \item The \emph{closed topology (or $\cl$)} is generated by families (\ref{equa:family}) which are jointly surjective and such that each $f_i$ is a closed immersion.
 \item Let $l$ be a prime integer. The \emph{finite-flat-surjective-prime-to-l topology (or $\fpsl$)} is generated by families $\{ Y \to X \}$ containing a single finite flat surjective morphism of constant degree prime to $l$, a prime integer.
 \item The \emph{finite-flat topology (or $\fps$)} is generated by families $\{ Y \to X \}$ containing a single finite flat surjective morphism.
 \item The \emph{completely decomposed finite topology (or $\cdf$)} is generated by completely decomposed families (\ref{equa:family}) with each $f_i$ a finite morphism.
 \item The \emph{envelope topology (or $\cdp$)} is generated by completely decomposed families (\ref{equa:family}) with each $f_i$ a proper morphism (\cite[Def.\ 18.3]{Ful}).
 \item The \emph{finite topology (or $\fin$)} is generated by families (\ref{equa:family}) which are jointly surjective and such that each $f_i$ is a finite morphism.
 \item The \emph{proper topology (or $\proptop$)} is generated by families (\ref{equa:family}) which are jointly surjective and such that each $f_i$ is a proper morphism.
 \item The \emph{$\rh$ topology} is rh $= \langle \Zar, \cdp \rangle$ (\cite[Def.\ 1.2]{GL}).
 \item The \emph{$\cdh$ topology} is $\cdh = \langle \Nis, \rh\rangle$ (\cite[Def.\ 5.7]{SV00}).
 \item The \emph{$\ldh$ topology} is $\ldh = \langle \cdh, \fpsl \rangle$ (\cite[Def.\ 3.2.1(2)]{Kel12}).
 \item The \emph{$\eh$ topology} is eh $= \langle \Et, \rh \rangle$ (\cite[Def.\ 2.1]{Gei06}).
 \item The \emph{$\fppf$ topology} is fppf $= \langle \Nis, \fps \rangle$ (\cite[\S IV.6.3]{SGA31}, cf. \cite[Cor.\ 17.16.2, Thm.\ 18.5.11(c)]{EGAIV4}).
 \item The \emph{$\qfh$ topology} is qfh $= \langle \Et, \fin \rangle$ (cf. \cite[Def.\ 3.1.2, Lem.\ 3.4.2]{Voe96}).
 \item The \emph{$\htop$ topology} is h $= \langle \Zar, \proptop \rangle$ (cf. \cite[Def.\ 3.1.2]{Voe96}, \cite[Def.\ 1.1, Thm.\ 4.1]{GL}).
\end{enumerate}
\end{defi}

The following diagram indicates some relationships. Not all relationships are shown.
\begin{equation} \label{equa:dependancies}
\xymatrix@!=0pt{
\cl \ar[rr]  && \cdf \ar[rrrrrrrrrr] \ar[dddd] &&&&&&&&&& \fin \ar[rr] \ar[d]  && \proptop \ar[dddd] \\ 
&& && &&& \fpsl \ar[dd]|(0.5)\hole \ar[rrr] &&& \fps \ar[urr] \ar[d] && \qfh \ar[dddrr]  \\
&&&& \Zar \ar[rr] \ar[dd]  && \Nis \ar[rr] \ar[dd] && \Et \ar[rr] \ar[dd]  && \fppf \ar[urr] &&&& \\ 
&&{\phantom{fcdp}}&&&&& \ldh \ar[drrrrrrr]|(0.15)\hole \\
&&\cdp \ar[rr] && \rh \ar[rr] && \cdh \ar[rr] \ar[ur] && \eh \ar[rrrrrr] &&&&&& \htop
\save \ar@<2ex>@{}"1,13";"1,14"^{\textrm{``transfers-type'' topologies}} \restore
\save "1,13"."2,13"*+[F.]\frm{} \restore
\save \ar@<2ex>@{}"3,5";"3,6"^{\textrm{``open-type'' topologies}} \restore
\save "3,5"."3,11"*+[F.]\frm{} \restore
\save \ar@<-2ex>@{}"5,3";"5,4"_{\textrm{``resolution-of-singularities-type'' topologies}} \restore
\save "4,3"."5,15"*+[F.]\frm{} \restore
}
\end{equation}

Amongst the geometric descriptions of $(\Sch / S, \tau)$-local affine $S$-schemes which we give, the only one which is not either already in the literature, or follows from the others is $\tau = \fin$. This we postpone to the next section. 

For the proper and envelope topologies, not every covering family is refinable by one whose morphisms are affine. Consequently, for $\tau = \cdp$ and $\tau = \proptop$, the $(\Aff / S, \tau\aff)$-local $S$-schemes with affine structural morphism are not necessarily $(\Sch / S, \tau)$-local. Conversely, for $\tau = \cdf$ and $\tau = \fin$, an $S$-scheme with affine structural morphism is $(\Sch / S, \tau$)-local if and only if it is $(\Aff / S, \tau\aff$)-local (Lemma~\ref{lemm:fLocalSchemes}). This is also clearly true for any topology finer than (which includes equal to) the Zariski topology.

Our convention for the term \emph{valuation ring} is a ring $A$ which is an integral domain (i.e., a non-zero ring for which $ab {=} 0 \implies a {=} 0$ or $b {=} 0$) and such that for every $a \in \Frac(A)$, either $a \in A$ or $a^{-1} \in A$. We allow the totally ordered set of prime ideals of $A$ to have any order type.

\begin{theo} \label{theo:localSchemes}
Suppose that $S$ is a separated noetherian scheme. An $S$-scheme whose structural morphism is affine is $(\Sch / S, \tau)$-local if and only if it is ($\ast$), where $\tau$ and ($\ast$) are as in Table~\ref{tabl:local} on page \pageref{tabl:local}.
\end{theo}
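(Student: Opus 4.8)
The plan is to prove Theorem~\ref{theo:localSchemes} one topology at a time, organizing the work according to the diagram~(\ref{equa:dependancies}) so that the generating topologies are handled first and the joins $\langle \sigma, \rho \rangle$ follow by combination. The key observation throughout is that the $(\Sch/S,\tau)$-local condition of Definition~\ref{defi:local} is a \emph{lifting property}: an affine $S$-scheme $P$ is $(\Sch/S,\tau)$-local exactly when every $\tau$-cover $\{U_i \to X\}$ and every morphism $P \to X$ admit a lift $P \to U_i$ for some $i$. So for each topology I want to translate this lifting property against the generating covers into the stated ring-theoretic condition $(\ast)$. First I would dispatch the cases already in the literature (Zariski $=$ local, Nisnevich $=$ henselian, étale $=$ strictly henselian, rh $=$ valuation ring, h $=$ \aic{} valuation ring) by citing the relevant references, since the theorem statement explicitly allows this.

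The substantive new work is the generating topologies $\cl$, $\cdf$, and $\fin$ (the last being postponed to Section~\ref{sect:finite}), together with the ``transfers-type'' joins. For the \textbf{closed topology} I would argue that lifting against jointly-surjective closed covers $\{Z_i \hookrightarrow X\}$ forces $P = \Spec(A)$ to be integral: given $f \in A$, the cover $\{V(f) \hookrightarrow X, V(1-fg)\hookrightarrow X\}$-type decompositions of a target into closed pieces must be liftable, and this rules out idempotents and zero-divisors, while integrality (in the sense of an integral domain) is visibly sufficient since a dominant map to $X$ from an integral scheme lands in whichever closed piece contains the generic point. For \textbf{cdf}, the completely-decomposed condition on finite covers adds, on top of integrality, the requirement that over the generic point one can lift through degree-one points of finite surjections; recognizing this as exactly the statement that the integral closure adds nothing new gives \emph{normality}, and one checks conversely that an integral normal affine scheme lifts every cdf-cover by sending the generic point to a degree-one point guaranteed by the completely-decomposed hypothesis. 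The passage to joins uses that being local for $\langle \sigma,\rho\rangle$ is the conjunction of being local for $\sigma$ and for $\rho$: this gives $\cdh = \langle \Nis, \rh \rangle$ as ``henselian $+$ valuation ring,'' $\eh = \langle \Et, \rh\rangle$ as ``strictly henselian $+$ valuation ring,'' $\qfh = \langle \Et, \fin \rangle$ as ``strictly henselian (hence local) $+$ integral $+$ \aic,'' and $\ldh$ analogously using the $\fpsl$ description from \cite{Dat12}.

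The crucial lemma underpinning the reduction to joins is that $\amalg_i \hom_S(P,U_i) \to \hom_S(P,X)$ being surjective for the generators of $\langle\sigma,\rho\rangle$ is equivalent to its holding separately for $\sigma$-covers and $\rho$-covers. This follows because a generating family of $\langle\sigma,\rho\rangle$ can be refined, via Lemma~\ref{lemm:topologyConventions}\eqref{lemm:topologyConventions:refine}, by composites of $\sigma$-covers and $\rho$-covers; the lifting property is manifestly closed under composition of covers, and closed under the refinement operation, so the two-sided implication goes through. I would verify at each join that the combined ring-theoretic conditions compose correctly -- for instance that a henselian valuation ring is precisely a valuation ring that is Nisnevich-local, which requires knowing that valuation rings are local so the henselian condition is the only additional Nisnevich constraint.

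The main obstacle I expect is the \textbf{finite topology} case, $\fin = $ ``integral and \aic.'' Here lifting against \emph{all} jointly-surjective finite covers (not just completely decomposed ones) is strictly stronger than cdf-locality: one must show that the generic point lifts through \emph{every} finite surjection, which forces the fraction field to be algebraically closed in a suitable sense -- this is exactly the content of the \aic{} (absolutely integrally closed) condition of Definition~\ref{defi:aic}. The delicate direction is sufficiency: given an integral \aic{} affine $P = \Spec(A)$ and a finite surjective cover of a target $X$, one must produce a lift, and because $P$ may be highly non-noetherian (cf. Lemma~\ref{lemm:nonDiscrete}), the standard spreading-out and normalization arguments need care. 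This is precisely why the authors isolate this case for the next section; I would set up the equivalence with $(\Aff/S,\fin\aff)$-locality via Lemma~\ref{lemm:fLocalSchemes} and then reduce the lifting problem to a statement about the residue field extensions of finite $A$-algebras, where the \aic{} hypothesis supplies the needed roots.
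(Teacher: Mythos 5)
Your proposal follows essentially the same route as the paper: cite the literature for the Zariski, Nisnevich, \'etale, rh, and h cases; prove the closed, cdf, and finite cases by direct lifting arguments against the generating covers (this is exactly the content of the paper's Lemma~\ref{lemm:fLocalSchemes}, including your reduction to $(\Aff/S,\fin\aff)$-locality); and deduce $\cdh$, $\ldh$, $\eh$, and $\qfh$ from the observation that being $\langle\sigma,\rho\rangle$-local is the conjunction of being $\sigma$-local and $\rho$-local. Apart from minor slips in the sketches (e.g.\ the $\{V(f), V(1-fg)\}$ family is not a closed cover --- the paper instead tests against the irreducible components of finite-type models, and against $\{V(f),V(g)\}$ when $fg=0$ this works), the decomposition, the key lemmas, and the division of labour with Section~\ref{sect:finite} all coincide with the paper's proof.
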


\begin{rema}
The cases $\qfh, \rh, \cdh, \eh$, and $\htop$ in the above statement were observed by %
the first author 
at Oberwolfach in August 2002.
\end{rema}

\begin{proof}
In all cases, testing the local property on the empty covering of the empty scheme one sees that a $(\Sch / S, \tau)$-local scheme is non-empty.

The case $\tau = \Zar$ is observed in \cite[\S 2]{GL}.

For the case $\tau = \htop$ (resp. $\rh$), by the Zariski case we know that $P$ is affine (over $\Spec(\ZZ)$). But then the result is \cite[Prop. 2.2]{GL} (resp. \cite[Prop. 2.1]{GL}).

The cases $\tau = \Nis$ and $\tau = \Et$ are classical.

The cases $\tau = \fin$, $\tau = \cdf$, and $\tau = \cl$ are Lemma~\ref{lemm:fLocalSchemes} treated in the next section.

Finally, one notices that for any two topologies $\sigma, \rho$, an $S$-scheme is $(\Sch / S, \langle \sigma, \tau \rangle)$-local if and only if it is both $(\Sch / S, \sigma)$-local and $(\Sch / S, \tau)$-local. So the cases $\tau = \cdh, \ldh, \eh,$ and $\qfh$ follow from the others.
\end{proof}

\begin{lemm} \label{lemm:nonDiscrete}
Suppose that $S$ is a non-empty separated noetherian scheme and $\tau$ is one of $\rh, \cdh, \eh, \ldh,$ or $\htop$. %
%
If all valuations are required to be discrete (i.e., the value group is isomorphic to $\ZZ^n$ with the lexicographic order for some $n = 0, 1, 2, \dots$) 
then the family of fibre functors of $\Shv_\tau(\Sch / S)$
\[ \left \{  \phi_R: F \mapsto \varinjlim_{(\Spec(R) \to X \to S)} F(X)  \textrm{ such that } R \textrm{ is } (\ast) \textrm{ from Table~\ref{tabl:local} on page \pageref{tabl:local}} \right \} \]
is no longer conservative (cf. Theorem~\ref{theo:equi}).
\end{lemm}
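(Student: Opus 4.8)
The plan is to argue by contradiction: assume the displayed family $\{\phi_R\}$, indexed by the \emph{discrete} valuation rings of the prescribed type, is conservative, and produce an explicit family of morphisms which that assumption would force to be a $\tau$-covering but which is not. The logical mechanism has two halves. First, for \emph{any} fibre functor $\phi$ and any $\tau$-covering family $\{U_i\to X\}$, the map $\coprod_i a(U_i)\to a(X)$ of $\tau$-sheaves is an epimorphism, so the exact functor $\phi$ makes $\{\phi(U_i)\to\phi(X)\}$ jointly surjective; using that sheafification is an isomorphism on stalks together with the limit description of $\phi_R$ in Theorem~\ref{theo:equi}, this reads $\phi_R(a Y)=\hom_S(\Spec R,Y)$, so joint surjectivity means every $\Spec R\to X$ lifts to some $U_i$. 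Second, were $\{\phi_R\}$ (over discrete $R$) conservative, the second part of Lemma~\ref{lemm:topologyConventions} would say a family is a $\tau$-cover \emph{iff} it is jointly surjective under every such $\phi_R$. Hence it suffices to exhibit, for each $\tau$ in the list, a family $\{U_n\to X\}_{n\ge 0}$ in $\Sch/S$ and a non-discrete $(\Sch/S,\tau)$-local valuation ring $V$ with: (a) every $\Spec R\to X$ with $R$ a discrete $(\Sch/S,\tau)$-local ring lifts to some $U_n$; and (b) the canonical map $\Spec V\to X$ lifts to no $U_n$. Then (b) and the first observation (applied to the genuine fibre functor $\phi_V$, legitimate by Theorem~\ref{theo:equi} since $V$ is $\tau$-local) show $\{U_n\to X\}$ is not a $\tau$-cover, contradicting (a) via the covering criterion.

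For the construction I would fix a closed point $s\in S$ (one exists, $S$ being non-empty noetherian), set $X=\AA^2_S$, and work in the fibre $\AA^2_{\kappa(s)}=\Spec\kappa(s)[x,y]$. Choose a rank-one valuation $v_0$ of $K=\kappa(s)(x,y)$ with $v_0(x)=1$, $v_0(y)=\sqrt2$ (value group $\ZZ+\ZZ\sqrt2\subset\RR$, centred at the origin $p_0$), so $v_0$ is \emph{not} discrete. Put $X_0=X$ and let $X_{n+1}\to X_n$ be the blow-up of the closed point $p_n$, where $p_n$ is the centre of $v_0$ on $X_n$ (a closed point on the strict transform of the fibre); set $U_n=X_n\smallsetminus\{p_n\}$ with its map to $X$, a morphism of $\Sch/S$. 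To make $V$ be of the type prescribed by Table~\ref{tabl:local}, hence $\tau$-local by Theorem~\ref{theo:localSchemes}, I would leave the valuation ring of $v_0$ unchanged for $\rh$, replace it by its henselization for $\cdh$, by its strict henselization for $\eh$, by the valuation ring over a maximal prime-to-$l$ extension of its fraction field for $\ldh$, and by the valuation ring of $v_0$ in an absolute integral closure for $\htop$; each of these preserves both the value group (hence non-discreteness) and the centre $p_n$.

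Property (b) is immediate: by the valuative criterion of properness $\Spec V\to X$ has a \emph{unique} lift to each proper $X_n$, and that lift sends the closed point to $p_n\notin U_n$. The real content is (a), which I expect to be the main obstacle. The crux is the valuation-theoretic claim that \emph{any} valuation ring $R$ for which $\Spec R\to X$ has centre $p_n$ on $X_n$ for all $n$ must be non-discrete. One checks that which chart of the successive point blow-ups contains $p_n$ is governed exactly by the Euclidean (continued-fraction) algorithm applied to the pair $(v_R(x),v_R(y))$, and that agreeing with $v_0$ forever forces this algorithm to reproduce the non-terminating, aperiodic pattern of $\sqrt2$; but in a lexicographic group $\ZZ^m$ the leading coordinate makes every such comparison sequence eventually rational, so $\Gamma_R\not\cong\ZZ^m$ and $R$ is non-discrete. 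When $\dim\OO_{S,s}>0$ one must additionally rule out ``transverse'' valuations mixing base and fibre directions; the same comparison argument applies once one notes that the base parameters, vanishing on the fibre, never supply the minimal valuation at any stage. Consequently every discrete $(\Sch/S,\tau)$-local $R$ has centre $\ne p_n$ for some $n$ and so lifts to the open $U_n$, while the remaining maps (those whose image avoids $p_0$, lands in $\{p_0\}$, or follows an algebraic curve through $p_0$) lift by elementary inspection of the exceptional fibres. This gives (a).

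Finally, the construction is uniform in the five topologies, so only the choice of $V$ above varies; since $\cdh,\ldh,\eh,\htop$ are finer than $\rh$, the non-lifting of the $\tau$-local point $V$ still shows the family is not a $\tau$-cover. The case $\tau=\htop$ is in fact the easiest, because there the only discrete $\htop$-local rings are algebraically closed fields (a divisible value group cannot be $\ZZ^m$ for $m>0$), for which (a) is trivial, yet the same family $\{U_n\to X\}$ and the same non-discrete $\htop$-local $V$ do the job.
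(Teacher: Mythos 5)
Your overall strategy is exactly the paper's: reduce via Lemma~\ref{lemm:topologyConventions}\eqref{lemm:topologyConventions:fibreDetects} to exhibiting a family that is jointly surjective on all discrete $(\ast)$-points but is not a $\tau$-cover, and build that family from the infinite sequence of blow-ups dictated by an irrational monomial valuation centred at a point of the fibre over a closed point $s\in S$, with $U_n$ the complement of the $n$-th centre; non-covering is witnessed just as you do it, by the non-discrete valuation ring and its henselian/strictly henselian/prime-to-$l$/\aic{} extensions for the finer topologies. The genuine weak point is your choice of ambient scheme. You take $X=\AA^2_S$ and blow up closed points of the fibre inside it, which forces you to handle (i) blow-ups of $\AA^2_S$ at closed points over an arbitrary noetherian base --- if $\OO_{S,s}$ is singular, non-reduced, or of high dimension, the charts and the local rings $\OO_{X_n,p_n}$ are genuinely complicated --- and (ii) the ``transverse'' valuations mixing base and fibre directions, which you dispatch with a single unproven sentence (``the base parameters \dots never supply the minimal valuation at any stage''). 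That sentence is not the actual argument: what one needs is that being centred at $p_n$ for \emph{every} $n$ forces positivity of the successive fibre-coordinate transforms, and that this system of inequalities alone, ignoring the base directions entirely, already contradicts $\Gamma_R\cong\ZZ^m$ lex. This is repairable, but it is a real gap as written. The paper removes the whole issue at the outset: since $s$ is a closed point, $\Spec\kappa(s)\to S$ is a closed immersion, so $M=\AA^2_{\kappa(s)}$ and all its blow-ups $M_n$ are themselves objects of $\Sch/S$; every morphism $\Spec V\to M$ then automatically lives in the fibre, and there is no transverse case and no dependence on the geometry of $S$ at all.

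Second, your proof of the key escape claim is a continued-fraction/sign-tracking computation (note in passing that the continued fraction of $\sqrt2$ is periodic, not aperiodic; all that matters is that it does not terminate). The paper's argument is cleaner and also subsumes the case of a map whose image is a curve through the origin, which you wave off as ``elementary inspection'': one has $R_v=\bigcup_n\OO_{M_n,c_n}$, so a valuation ring $V$ whose lifts are centred at every $c_n$ receives a local homomorphism $R_v\to V$; its kernel is a prime ideal of the one-dimensional ring $R_v$, and it cannot be the maximal ideal (else the map factors through the origin), so it is zero; hence the map is dominant, $V$ dominates $R_v$ inside $k(x,y)$, and maximality of valuation rings under domination gives an order-preserving embedding $\ZZ+\ZZ\sqrt2\hookrightarrow\ZZ^m$ (lex), which is impossible because $\ZZ+\ZZ\sqrt2$ is archimedean and non-cyclic while every archimedean subgroup of lexicographic $\ZZ^m$ is cyclic. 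Your sign analysis encodes the same obstruction in the dominant case, but the union-of-local-rings identity is what makes the dominant case and the curve case go through in one stroke; if you keep your set-up, you should prove the chart-by-chart claim rather than assert it.
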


\begin{proof}
By Lemma~\ref{lemm:topologyConventions}\eqref{lemm:topologyConventions:fibreDetects}, it suffices to construct a family $\{U_i \to X\}_{i \in I}$ of morphisms in $\Sch / S$ which is not a $\tau$-covering family but which induces an epimorphism $\amalg_{i \in I} U_i(R) \to X(R)$ for every discrete valuation ring $R$ which is ($\ast$).

For $\tau = \ldh$ or $\htop$, the family $\{ \GG_{m, S} \to \AA^1_S, S \stackrel{0}{\to} \AA^1_S\}$ works: If a valuation ring $R$ has fraction field which admits all $l'$-roots for some prime $l'$, then its value group has a canonical \mbox{$\ZZ[\tfrac{1}{l'}]$-module} structure. So if a discrete valuation ring has fraction field admitting all $l'$-roots for some prime $l'$ then it is a field. For any field $K$ over $S$ the morphism $\GG_{m}(K) \amalg \{ 0 \} \to \AA^1(K)$ is the isomorphism $K^* \amalg \{0\} \to K$. However, it is not possible to refine the morphism $\GG_{m, S} \amalg S \to \AA^1_S$ by a topological epimorphism (cf. \cite[Def. 3.1.1]{Voe96}) since $S$ is open in the source but not in the target. So the family $\{ \GG_{m, S} \to \AA^1_S, S \stackrel{0}{\to} \AA^1_S\}$ is not an $\htop$-covering family (cf. \cite[Def. 3.1.2]{Voe96}), and therefore not an $\ldh$-covering family either.

Now we describe a family which works for all $\tau$ in the statement. Let $s = \Spec(k)$ be a closed point of $S$. If $\alpha > 0$ is an irrational real number, one can consider the monomial valuation $v$ of $k(x, y)$ such that $v(x^iy^j) = i + j \alpha$. This valuation on a non-zero polynomial is the minimum of the valuations of the monomials occurring in it. This valuation dominates the local ring of $M = \Spec(k[x, y])$ at the origin, has residue field $k$, and defines a sequence of blowing-ups of $k$-points $M = M_0 \leftarrow M_1 \leftarrow M_2 \leftarrow \dots $ where $M_{n + 1}$ is the blow-up of $M_n$ at the center of $v$.

Let $M_n^\ast = M_n - \{$center of $v \}$ and let $R_v$ be the valuation ring of $v$. The family $\{ M_n^\ast \to M \}_{n \geq 0}$ is not an $\rh$-cover of $M$ since for all $n \geq 0$ the image of the unique factorisation $\Spec(R_v) \to M_n \to M$ of the canonical morphism $\Spec(R_v) \to M$ does not lie in $M_n^\ast$. However, we will show now that for any discrete valuation ring $V$ (or a valuation ring of rational rank $\leq 1$) the morphism $\amalg_{ n \geq 0} M_n^\ast(V) \to M(V)$ is surjective. Suppose we are given a morphism $f: \Spec(V) \to M$. Note that every point of $M$ lifts to a point of $M_1^*$ with trivial residue field extension so if $f$ factors through a point then $f$ lifts to $M_1^*$. Otherwise $f$ lifts uniquely to morphisms $\Spec(V) \to M_n$, and we claim that one of them must have image in $M_n^*$. Note that one has $R_v = \cup_{n \geq 0} \OO_{M_n, c_n}$ where $c_n \in M_n$ is the center of $v$ and the union takes place in the function field of $M$. So if no $\Spec(V) \to M_n$ has image inside $M_n^*$ then we get a local homomorphism of valuation rings $V \leftarrow R_v$, which must be injective as $R_v$ is of dimension one and the generic point of $\Spec(V)$ does not go to the origin of $M$, so it induces an order preserving inclusion of value groups which does not exist.

By considering the valuation ring of a choice of extension of $v$ to an algebraic closure of $k(x, y)$, one sees that $\{ M_n^\ast \to M \}_{n \geq 0}$ is also not an $h$-covering family and therefore not a $\cdh, \eh$, or $\ldh$ covering family either.  
\end{proof}

\begin{samepage}
\begin{rema}\ 
\begin{enumerate}
 \item In Definition~\ref{defi:topologies}, the Zariski, Nisnevich, étale, closed, $\cdp$, $\cdf$, finite, and proper topologies are defined in terms of finite families \eqref{equa:family} satisfying certain conditions. However, as our base scheme is noetherian, one can show that if an infinite family satisfies the condition in question, then some finite subfamily does as well.

 \item In any of the cases, by taking finite compositions (in any order) of the generating families one obtains a pretopology consisting of finite families which defines the topology.
\end{enumerate}
\end{rema}
\end{samepage}

\begin{rema}
Using absolute noetherian approximation, one can show that Theorem~\ref{theo:localSchemes} holds more generally for quasi-compact separated $S$-schemes. Under our definitions, for a non-empty $S$, there are non-separated $(\Sch / S, \tau)$-local schemes.
\end{rema}

\section{The finite, $\qfh$, and $\fppf$-topologies} \label{sect:finite}

This sections contains the commutative algebra required to characterise $(\Sch / S, \fin)$-local (and consequently $(\Sch / S, \qfh)$-local) schemes. We also make some basic comments on the $\fppf$-case.

\begin{defi} \label{defi:aic}
An integral ring $A$ is said to be \emph{absolutely integrally closed} or \aic{} if it is normal with algebraically closed fraction field. An integral scheme $X$ is said to be \emph{absolutely integrally closed} (or \aic{}) if all its local rings are \aic{}, or equivalently, if $A$ is \aic{} for every non-empty open affine $\Spec(A) \subseteq X$ (\cite[Lem.\ 5.8]{Dat12}).
\end{defi}

For various properties about absolutely integrally closed rings and schemes see \cite[\S 3, \S 4, and \S 5]{Dat12}. For example, suppose that $A$ is an \aic{} integral domain. Let $B \subseteq A$ be a subring which is integrally closed in $A$, let $\p \subseteq A$ be a prime ideal, and let $S \subseteq A$ be a multiplicatively closed subset not containing zero. Then $B$, $A / \p$ and $S^{-1}A$ are all \aic{} integral domains \cite[Lem.\ 3.5, 4.1, and 5.4]{Dat12}. Furthermore, an integral domain $A$ is \aic{} if and only if $A_\p$ (resp. $A_\m$) is \aic{} for all prime ideals $\p$ (resp. maximal ideals $\m$) of $A$ \cite[Lem.\ 5.8]{Dat12}.

\begin{lemm} \label{lemm:fLocalSchemes}
Let $S$ be a separated noetherian scheme and suppose that $P = \varprojlim P_\lambda$ is a projective limit of schemes $P_\lambda$ in $\Aff / S$. Let $\sigma = \fin, \cdf,$ or $\cl$ and let
\[ (\ast\ast) = \left \{ \begin{array}{ll}
\textrm{integral and \aic{}} & \textrm{ if } \sigma = \fin, \\ 
\textrm{integral and normal} & \textrm{ if } \sigma = \cdf, \\ 
\textrm{integral} & \textrm{ if } \sigma = \cl.
\end{array} \right . \]
The following are equivalent.
\begin{enumerate}
 \item \label{lemm:fLocalSchemes:aic} $P$ is $(\ast\ast)$.
 \item \label{lemm:fLocalSchemes:loc} $P$ is $(\Sch / S, \sigma)$-local.
 \item[{(2$\aff$\!\!)}] \label{lemm:fLocalSchemes:locAff} $P$ is $(\Aff / S, \sigma\aff)$-local.
\end{enumerate}
\end{lemm}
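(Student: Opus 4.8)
The plan is to prove the cycle of implications $(1) \Rightarrow (2) \Rightarrow (2\aff) \Rightarrow (1)$. Two of these are cheap. The implication $(2) \Rightarrow (2\aff)$ is formal: by the very definition of $\sigma\aff$, the image in $\Sch/S$ of any $\sigma\aff$-covering family is a $\sigma$-covering family, and $\hom_S(P,-)$ agrees on the two categories, so a lift guaranteed by $(2)$ is exactly the lift required by $(2\aff)$. Throughout I use that $\fin$ is finer than $\cdf$, which is finer than $\cl$ (a jointly surjective family of closed immersions is completely decomposed, and a completely decomposed finite family is jointly surjective); this lets the conditions ``integral'', ``integral and normal'', and ``\aic{}'' be built up one clause at a time. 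I also reduce to $S = \Spec R_0$ affine, so that $P = \Spec A$ with $A = \varinjlim A_\lambda$ a filtered colimit of finite type $R_0$-algebras.

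For $(1) \Rightarrow (2)$, the key observation is that $f\colon P \to X$ lifts to a member of a $\sigma$-cover $\{U_i \to X\}$ if and only if the base-changed family $\{U_i \times_X P \to P\}$ has a member admitting a section; since $\sigma$ is stable under base change, it suffices to show every $\sigma$-cover of the integral scheme $P$ splits. Let $\eta$ be the generic point of $P$ and $K = K(P)$ its function field. In each case I produce a member $V_0 \to P$ and a point $w \in V_0$ over $\eta$ with $k(w) = K$: for $\cl$, joint surjectivity forces $\eta$ into some pulled-back closed subscheme, whose ideal then has zero stalk at $\eta$ and hence vanishes; for $\cdf$ this is complete decomposition; for $\fin$ it is joint surjectivity together with the fact that, $K$ being algebraically closed, every residue field of the finite fibre over $\eta$ equals $K$. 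Taking $W = \overline{\{w\}}$ with its reduced structure, $W \to P$ is a finite birational morphism with $W$ integral, so normality (integral closedness of the local rings of $P$) forces it to be an isomorphism; its inverse is the desired section $P \to W \hookrightarrow V_0$. For $\cl$ no normality is needed, as $V_0 \to P$ is already a closed immersion equal to the identity.

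For $(2\aff) \Rightarrow (1)$ I argue by contraposition, exhibiting for each failure of $(\ast\ast)$ a cover in $\Aff/S$ through which the structural morphism does not lift. If $A$ is not a domain, pick $a,b \in A \setminus \{0\}$ with $ab = 0$; then $a,b$ define a morphism $P \to V(xy) \subseteq \AA^2_S$, and the closed cover $\{V(x), V(y)\}$ of $V(xy)$ cannot be lifted without forcing $a = 0$ or $b = 0$ (this single cover settles the ``integral'' clause for all three topologies). If $A$ is a domain that is not integrally closed, choose $t \in K \setminus A$ integral over $A$ and consider the pair $\Spec A[t] \to \Spec A$ and $Z \hookrightarrow \Spec A$, where $Z = \mathrm{Supp}(A[t]/A)$ is the locus where the first morphism fails to be an isomorphism (closed, since $A[t]/A$ is a finite $A$-module). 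As the first morphism is an isomorphism over $\Spec A \setminus Z$ and the second is a closed immersion, this pair is a completely decomposed finite cover; a lift of the identity would either retract $A \hookrightarrow A[t]$ (forcing $t \in A$) or factor through the proper closed subscheme $Z$ (forcing $\eta \in Z$, impossible as $t \in A_{(0)}$), a contradiction. Finally, if $K$ is not algebraically closed, normality lets me take a monic $g \in A[T]$ irreducible over $K$ of degree $> 1$, and a lift along $\Spec A[T]/(g) \to \Spec A$ would produce a root of $g$ in $A \subseteq K$, which is absurd.

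The main obstacle is the non-noetherian nature of $P = \varprojlim P_\lambda$. The covers built for the non-normal and non-algebraically-closed cases are finite and affine over $P$ but not a priori of finite type over $S$, so they are not literally objects of $\Aff/S$ and $(2\aff)$ does not apply directly. Each must be descended to a finite-type model $\Spec A_\lambda$ via the limit formalism of \cite[\S 8]{EGAIV3}. For the monic polynomial this is painless---one simply spreads out $g$. The delicate case is normality: one must spread out $t$, the finite algebra $A[t]$, and the non-isomorphism locus $Z$ to some index $\lambda$ \emph{and} arrange that the resulting pair is still completely decomposed there (equivalently, that the spread-out morphism is an isomorphism exactly off the spread-out $Z$), so as to obtain a genuine $\cdf\aff$-cover of $\Spec A_\lambda$; only then does $(2\aff)$ yield a lift at level $\lambda$ that pulls back to the contradiction over $P$. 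Checking that complete decomposition descends in this way, together with the globalization over a non-affine base, is where essentially all of the technical effort concentrates; by contrast, the geometric argument for $(1)\Rightarrow(2)$ and the explicit covers themselves are elementary.
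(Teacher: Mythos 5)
Your directions (1)$\Rightarrow$(2) and (2)$\Rightarrow$(2\aff) coincide with the paper's proof: choose a member of the pulled-back cover through which the generic point factors (complete decomposition for $\cl$ and $\cdf$, algebraic closedness of $K$ for $\fin$), then the closure of the image of $\eta$ is finite and birational over $P$, hence an isomorphism. For (2\aff)$\Rightarrow$(1) you genuinely diverge: you argue by contraposition with covers manufactured from global data on $P$ (elements $a,b$ with $ab=0$; $t\in K\setminus A$ and $Z=\mathrm{Supp}(A[t]/A)$; a monic $g$) and then propose to descend them to finite type. The paper instead argues directly and never builds a cover of $P$ at all: after replacing each $P_\lambda$ by the scheme-theoretic image of $P \to P_\lambda$, it tests the lifting property against canonical covers of the finite-type models $P_\lambda$ --- their irreducible components (integrality), the finite birational models of the normalization $P_\lambda^\sim$ (normality; for $\cdf$ one adjoins the non-isomorphism locus to get complete decomposition), and the finite models, via \cite[Tag 0817]{Sta}, of the normalization of $P_\lambda$ in a finite extension of $R(P_\lambda)$ (algebraic closedness of $R(P)$). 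These covers lie in $\Aff / S$ by construction, so no descent and no affineness of $P$ is ever needed.

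This difference is where your proposal has a genuine gap. Your opening reduction ``to $S = \Spec R_0$ affine, so that $P = \Spec A$'' is not available: the hypothesis is only that the structural morphism $P \to S$ is affine, so for non-affine $S$ the scheme $P$ need not be affine (e.g.\ $P = S = \PP^1_k$ is a legitimate, and indeed $\cl$-local, instance of the lemma), and none of the relevant properties is detected by global sections: for $S = \PP^2_k$ and $P \subseteq S$ the union of two distinct lines, $\Gamma(P,\OO_P)=k$ is even a field, yet $P$ is reducible, so the zero-divisor cover you need simply does not exist; likewise $t$ and the coefficients of $g$ live in $K=\OO_{P,\eta}$, not among ratios of global sections. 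You flag ``globalization over a non-affine base'' at the end, but this is not a routine patch --- your covers are built out of $\Gamma(P,\OO_P)$, and repairing this pushes you back to covers of the models $P_\lambda$, i.e.\ essentially to the paper's argument. Second, even in the affine case the crux of your contrapositive --- spreading out $\{Z \hookrightarrow \Spec A,\ \Spec A[t]\to \Spec A\}$ to a genuine $\cdf\aff$-cover of some $\Spec A_\lambda$ --- is only announced, and by your own account is ``where essentially all of the technical effort concentrates.'' (It is fillable: define $Z_\lambda$ at level $\lambda$ as $\mathrm{Supp}(A_\lambda[t_\lambda]/A_\lambda)$, so complete decomposition is automatic, and note $q_\lambda^{\,n-1}$ kills the quotient so $Z_\lambda$ misses the generic point; but as written it is a declared hole, not a proof.) A minor further point: your integrality step misses $A=0$; non-emptiness requires testing against the empty cover of the empty scheme, as the paper does.
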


\begin{proof}
(\ref{lemm:fLocalSchemes:aic} $\Rightarrow$ \ref{lemm:fLocalSchemes:loc}). Suppose that $\{U_i \to X\}_{i = 1}^n$ is a $\sigma$-covering family of some $X \in \Sch / S$ and $P \to X$ is an $S$-morphism. It suffices to show that $P {\times_X} U_i \to P$ has a section for some $i \in I$. Choose an $i$ such that the inclusion of the generic point $\eta \to P$ factors through $P \times_X U_i \to P$ (if $\sigma = \cdf$ or $\cl$ then such an $i$ exists because $\amalg_{i = 1}^n U_i \to X$ is completely decomposed, and if $\sigma = \fin$ then such an $i$ exists because $\amalg_{i = 1}^n \eta {\times_X}U_i \to \eta$ is finite surjective and $\eta$ is the spectrum of an algebraically closed field). Let $V \subseteq P \times_X U_i$ be the closure of the image of $\eta$. Then $V \to P$ is a finite birational morphism of integral schemes with normal target, and is therefore an isomorphism.

(\ref{lemm:fLocalSchemes:loc} $\Rightarrow$ \ref{lemm:fLocalSchemes:locAff}$\aff$). Clear.

(\ref{lemm:fLocalSchemes:locAff}$\aff$ $\Rightarrow$ \ref{lemm:fLocalSchemes:aic}). Notice that we may replace each $P_\lambda$ by the scheme theoretic image of $P \to P_\lambda$, so that if $P$ is integral then the $P_\lambda$'s are integral and morphisms send generic points to generic points.

\emph{$P$ is integral.} Testing the local property on the empty covering of the empty scheme one sees that $P$ is non-empty, and so $P_\lambda$ is non-empty for all $\lambda$. Now testing the local property on the covering of each $P_\lambda$ by its irreducible components with the reduced subscheme structure, one sees that each $P_\lambda$ is integral, and hence $P$ is integral.

\emph{$P$ has algebraically closed function field if $\sigma = \fin$.} Let $R(P)$, $R(P_\lambda)$ denote the fields of rational fractions. If $E$ is a finite field extension of $R(P_\lambda)$ consider the normalization $\pi: Q \to P_\lambda$ of $P_\lambda$ in $E$ (\cite[\S 6.3]{EGAII}). By \cite[Tag 0817]{Sta}, $Q$ is a cofiltered limit of finite $P_\lambda$-schemes $Q_i$ corresponding to finite subalgebras of $\pi_*\OO_Q$. So $Q_i$ is integral and for some $i$, we have $R(Q_i) = E$. By the ``local'' property, $P \to P_\lambda$ lifts to $P \to Q_i$ so $R(Q_i)$ embeds in $R(P)$ over $R(P_\lambda)$. Now if $f(T) \in R(P)[T]$ is an irreducible polynomial over $R(P)$ then $f$ is contained in some $R(P_\lambda)[T]$ and defines a finite extension of $R(P_\lambda)$ so by the above, $f$ has a root in $R(P)$. Thus $R(P)$ is algebraically closed.

\emph{$P$ is normal if $\sigma = \fin$ or $\cdf$.} The normalization $P_\lambda^\sim \to P_\lambda$ is a cofiltered limit of finite birational $P_i \to P_\lambda$. By the ``local'' condition $P \to P_\lambda$ lifts to $P \to P_i$ for each $i$ (for $\sigma = \fin$ this is because each $\{P_i \to P_\lambda\}$ is a $\fin\aff$-covering family, and if $\sigma = \cdf$ this is because for each $i$ there exists a proper closed subscheme $V_i \to P_\lambda$ for which $\{V_i \to P_\lambda, P_i \to P_\lambda\}$ is a $\cdf\aff$-covering family and the dominant morphism $P \to P_\lambda$ cannot factor through the nondominant $V_i \to P_\lambda$). The lifting is unique as it is unique at the generic point and $P_i \to P_\lambda$ is separated. So for each $\lambda$ we get a lifting of $P \to P_\lambda$ to $P \to P_\lambda^\sim$, and these liftings are compatible with the morphisms $P_\mu^\sim \to P_\lambda^\sim$, and as $P^\sim = \varprojlim P_\lambda^\sim$ we get a section of the normalization morphism $P^\sim \to P$, so $P$ is normal.
\end{proof}

There is little we can say about $(\Sch / S, \fppf)$-local $S$-schemes. As we mentioned in the introduction, the reader can also consult (the independent) \cite{Sch14} for some interesting results about schemes local for the $\fppf$-topology on the category of \emph{all} schemes (with no noetherian/quasi-compact/quasi-separated hypotheses) which overlap with the following lemmas.

\begin{lemm} \label{lemm:fppfBasic}
Suppose $S$ is a separated noetherian scheme, and $\Spec(R) \to S$ is $(\Sch / S, \fppf)$-local.
\begin{enumerate}
 \item $R$ is strictly henselian.
 \item \label{enum:algClos} All residue fields of $R$ are algebraically closed.
 \item If $R$ is integral, then it is normal and therefore $(\Sch / S, \qfh)$-local.
 \item If $S \neq \varnothing$, there exists at least one $(\Sch / S, \fppf)$-local $\Spec(R) \to S$ such that $R$ is not integral.
 \item If $R$ is noetherian, then it is reduced. Consequently, condition (\ref{enum:algClos}) above is not sufficient for $\Spec(R) \to S$ to be $(\Sch / S, \fppf)$-local.
\end{enumerate}
\end{lemm}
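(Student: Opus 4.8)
The plan is to extract a single ``master'' property from the finite-flat part of the topology and deduce (1)--(3) and (5) from it, then to build an explicit example for (4). Since $\fppf = \langle \Nis, \fps \rangle$, being $(\Sch/S,\fppf)$-local is equivalent to being simultaneously $(\Sch/S,\Nis)$-local and $(\Sch/S,\fps)$-local, and in particular implies $(\Sch/S,\Nis)$-locality.

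\emph{Master claim: every monic $f \in R[T]$ splits into linear factors over $R$.} Given monic $f$ of degree $d$, the $R$-algebra $B = R[T]/(f)$ is free of rank $d$, so $\Spec(B) \to \Spec(R)$ is finite, faithfully flat and surjective. Writing $\Spec(R) = \varprojlim_\lambda P_\lambda$ with $P_\lambda = \Spec(A_\lambda) \in \Aff/S$, the coefficients of $f$ descend to a monic $f_\lambda \in A_\lambda[T]$ for $\lambda$ large, and $Y_\lambda = \Spec(A_\lambda[T]/(f_\lambda)) \to P_\lambda$ is then a single finite flat surjective morphism of constant degree, i.e.\ an $\fps$-covering family in $\Sch/S$ (spreading out as in \cite[\S 8]{EGAIV3}). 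Applying the local property to $\Spec(R) \to P_\lambda$ yields a lift $\Spec(R) \to Y_\lambda$, which is exactly an $R$-algebra section $B \to R$, hence a root $r \in R$ of $f$. Dividing out $(T - r)$ and inducting gives the claim. Parts (1)--(3) now follow. For (1), $R$ is henselian local by $\Nis$-locality and Theorem~\ref{theo:localSchemes}; lifting any monic polynomial over the residue field $k$ to a monic polynomial over $R$ and reducing a root modulo the maximal ideal shows $k$ is algebraically closed, so a henselian local ring with separably closed residue field is strictly henselian. For (2), given a prime $\p$ and a monic $h \in \kappa(\p)[T] = \Frac(R/\p)[T]$, clear denominators by a single $v \in R/\p \setminus \{0\}$ to form the monic $H(T) = v^{\deg h} h(T/v) \in (R/\p)[T]$, lift $H$ to a monic polynomial over $R$, split it over $R$ by the master claim, and reduce modulo $\p$ to obtain a root of $H$ in $R/\p$, hence a root of $h$ in $\kappa(\p)$; thus every residue field is algebraically closed, with no localization lemma required. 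For (3), if $R$ is integral and $x \in \Frac(R)$ is integral over $R$, then $x$ is a root of a monic $f \in R[T]$; the master claim splits $f$ over $R \subseteq \Frac(R)$, and by uniqueness of factorization in $\Frac(R)[T]$ every root of $f$ in $\Frac(R)$ lies in $R$, so $x \in R$ and $R$ is normal. Together with (2) this makes $R$ local, integral, and \aic{}, which is the $\qfh$-local condition of Theorem~\ref{theo:localSchemes}.

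For (5), suppose $R$ is noetherian but not reduced. As its nilradical $\n$ is finitely generated hence nilpotent, choose $0 \neq \epsilon \in \n^{N-1}$ with $\n^N = 0$, so that $\epsilon^2 = 0$. Using the master claim repeatedly (applied to $T^2 - \epsilon$, then $T^2 - a_0$, and so on), produce $a_0, a_1, \dots \in R$ with $a_0^2 = \epsilon$ and $a_{n+1}^2 = a_n$; then each $a_n$ is nilpotent while $a_n^{2^{n+1}} = \epsilon \neq 0$. Choosing $n$ with $2^{n+1} \geq N$ gives $a_n^{2^{n+1}} \in \n^{2^{n+1}} = 0$, a contradiction, so $R$ is reduced. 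For the final assertion, with $k$ algebraically closed the ring $k[\epsilon]/(\epsilon^2)$ satisfies condition (2) (its only residue field is $k$) yet is noetherian and non-reduced, hence not $(\Sch/S,\fppf)$-local.

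For (4), the plan is to glue two points into a reducible one. Over a nonempty $S$ one may, using the $\eh$/$\htop$ cases of Theorem~\ref{theo:localSchemes}, produce a strictly henselian \aic{} valuation ring $V$ of positive rank with $\Spec(V) \to S$; such $V$ is $\Nis$-local (being henselian) and $\fps$-local (any finite flat cover is proper, so a section over the generic point extends by the valuative criterion), hence $(\Sch/S,\fppf)$-local. Let $\kappa$ be its residue field and set $R = V \times_\kappa V$, the subring of $V \times V$ of pairs congruent modulo $\m_V$. Then $R$ is local with residue field $\kappa$, reduced, and has two minimal primes, so it is not integral. \textbf{The main obstacle} is verifying that $\Spec(R) \to S$ is $(\Sch/S,\fppf)$-local. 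The approach is to use the Ferrand/Milnor-square description of finite flat and étale algebras over the conductor square $R = V \times_\kappa V$: given an $\fppf$-cover together with a map from $\Spec(R)$, one restricts along the two projections $R \to V$, lifts over each copy of $V$ using its local property, and glues the two lifts into a lift over $R$. The delicate point is arranging that the two lifts agree over the common closed fibre $\Spec(\kappa)$; here one exploits that $\kappa$ is algebraically closed, so the cover is split over the closed point, but making the two independently-chosen sections coincide for non-étale finite flat covers is exactly where the real work lies.
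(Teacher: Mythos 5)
Parts (1), (2), (3), and (5) of your proposal are correct, and in substance they coincide with the paper's proof: your ``master claim'' (every monic over $R$ splits) is exactly the mechanism the paper uses, your part (2) is the same denominator-clearing trick (the paper's $g(T)=c^nf(T/c)$ is your $H$), part (3) is identical, and part (5) differs only cosmetically (the paper extracts a single $n$-th root of $r\in\n$ with $\n^n=0$, getting $s\in\n$ and $r=s^n=0$ in one step, rather than iterated square roots). One small point in your favour: for (1) you deduce strict henselianity from $\Nis$-locality plus algebraic closedness of the residue field, whereas the paper invokes the fact that $\fppf$ is finer than the étale topology; given that the paper \emph{defines} $\fppf=\langle\Nis,\fps\rangle$, your route is the more self-contained one.

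The genuine gap is part (4), and you have flagged it yourself: you never prove that $R=V\times_\kappa V$ is $(\Sch/S,\fppf)$-local, and that is the entire content of the statement. The paper avoids any construction: it argues by contradiction. If every $(\Sch/S,\fppf)$-local ring were integral, then by part (3) every such ring would be $(\Sch/S,\qfh)$-local, and conversely every $(\Sch/S,\qfh)$-local affine scheme is $(\Sch/S,\fppf)$-local because $\fppf$ is coarser than $\qfh$ (as $\Nis\subseteq\Et$ and $\fps\subseteq\fin$); so the two classes of local objects would coincide. By Theorem~\ref{theo:equi} these classes give conservative families of fibre functors, so by Lemma~\ref{lemm:topologyConventions}\eqref{lemm:topologyConventions:fibreDetects} the topologies $\fppf$ and $\qfh$ would then have the same covering families, i.e.\ be equal --- contradicting the existence of finite surjective morphisms not refinable by flat ones. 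This is short precisely because it is non-constructive.

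Your constructive route can in fact be completed, but the missing step is a real lemma, not a formality. What you need is: for $V$ a strictly henselian \aic{} valuation ring with fraction field $K$ and residue field $\kappa$, every finite flat $V$-algebra $B_1$ admits a section $B_1\to V$ through \emph{any} prescribed $\kappa$-point of its closed fibre. Proof sketch: since $V$ is henselian, $B_1=\prod_i B_{1,i}$ with each $B_{1,i}$ finite local over $V$, and the $\kappa$-points of the closed fibre correspond bijectively to the factors (each local factor has exactly one point over $\m_V$); each nonzero flat factor satisfies $B_{1,i}\otimes_V K\neq 0$, so there is a $K$-algebra map $B_{1,i}\otimes_V K\to K$ ($K$ is algebraically closed), and the image of the composite $B_{1,i}\to K$ is a finite, hence integral, $V$-subalgebra of $K$, hence lands in $V$ by normality; this section automatically passes through the unique closed-fibre point of that factor. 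With this lemma, choose $s_2:B_2\to V$ arbitrarily, choose $s_1:B_1\to V$ through the matching point, and glue using the exact sequence $0\to B\to B_1\times B_2\to B\otimes_R\kappa\to 0$ (tensor $0\to R\to V\times V\to\kappa\to 0$ with the flat $B$), which identifies $B\cong B_1\times_{B\otimes\kappa}B_2$; you must also verify that $R$ is henselian (lift the two factorizations over each copy of $V$ and observe their coefficients are congruent mod $\m_V$, hence define a factorization over $R$), so that $\Nis$-locality holds by Theorem~\ref{theo:localSchemes}, and reduce $\fppf$-locality to the generating $\Nis$- and $\fps$-families. As submitted, though, your part (4) is a plan whose decisive step is acknowledged but absent, so it does not yet prove the statement.
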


\begin{proof}\ 
\begin{enumerate}
 \item This is an immediate consequence of the $\fppf$-topology being finer than the étale topology.

 \item Suppose that $\p$ is a prime ideal of $R$, let $\kappa = \Frac(R / \p)$ and suppose that $f(T) = T^n + \sum_{i = 0}^{n - 1} \tfrac{b_i}{c_i} T^i \in \kappa[T]$ is a monic polynomial. We will confuse the $b_i, c_i \in R$ with their images in $R / \p$. Let $c = \prod_{i = 0}^{n - 1} c_i$, let $a_i =  c^{n - i}\tfrac{b_i}{c_i} \in R$ (for $0 \leq i \leq n - 1$), and set \mbox{$g(T) = T^n + \sum_{i = 0}^{n - 1} a_i T^i$.} Now $\Spec(R[T] / (g(T))) \to \Spec(R)$ is a finite flat surjective morphism. The polynomial $g$ therefore has some solution $d \in R$ since $\Spec(R)$ is $(\Sch / S, \fppf)$-local.%
\footnote{Strictly speaking, one should use limit arguments to lift $\Spec(R[T] / (g(T))) \to \Spec(R)$ to a finite flat surjective morphism $U \to X$ in $\Aff / S$ equipped with an $S$-morphism $\Spec(R) \to X$, and then convert a factorization $\Spec(R) \to U \to X$ into a solution $d \in R$. For example, let $\mathcal{A}$ be the $\OO_S$-algebra generated by the $a_i$ and take $X = \Spec(\mathcal{A})$.} %
Notice that $c$ is not zero in $R / \p$ since the $c_i$ are not zero in $R / \p$, so $\tfrac{d}{c}$ is a well-defined element of $\kappa = \Frac(R / \p)$. Now in $\kappa$ we have $f(\tfrac{d}{c}) = \tfrac{1}{c^n} g(d) = 0$.


 \item Notice that if $R$ is an integral ring for which every finite flat $R$-algebra admits a retraction, then every monic in $R[T]$ splits into linear factors. Consequently, $R$ is integrally closed in its field of fractions. For the $\qfh$-statement, since $\qfh = \langle \Et, \fin \rangle$, and $\Et$ is coarser than $\fppf$, it suffices to show that $\Spec(R) \to S$ is $(\Sch / S, \fin)$-local. But $R$ is normal with algebraically closed fraction field, so $\Spec(R)$ is \aic{}, and therefore $(\Sch / S, \fin)$-local (Lemma~\ref{lemm:fLocalSchemes}).

 \item Suppose the contrary. By the previous part, this would then imply that the class of $(\Sch / S, \fppf)$-local affine $S$-schemes and the class of $(\Sch / S, \qfh)$-local affine $S$-schemes are the same. But this would then imply that the $\qfh$ and $\fppf$-topologies were equal (Lemma~\ref{lemm:topologyConventions}(\ref{lemm:topologyConventions:fibreDetects})). This is false since there are many surjective finite morphisms in $\Sch / S$ which are not refinable by flat ones.

 \item Let $I = \{r : r^n = 0 $ for some $n > 0 \}$ be the nilradical. Let $n$ be a positive integer $n$ such that $I^n = 0$ (existence of such an $n$ is where we use the hypothesis that $R$ is noetherian). For every $r \in I$, the $R$-algebra \mbox{$R[T] / (T^n - r)$} is finite and flat, and therefore admits a retraction. Equivalently, there exists $s \in R$ such that $s^n = r$. But then $(s^n)^n = 0$ so $s \in I$, and therefore $s^n = 0$, and hence $r = 0$. So $I = \{0\}$. \qedhere
\end{enumerate}
\end{proof}

\begin{rema}[{\cite[\href{http://stacks.math.columbia.edu/tag/04C5}{Tag 04C5}]{Sta}}]
Heuristically, for a ring $A$, an ideal $I$, and a flat, finitely presented algebra $A / I \to B_0$, there is no reason to suppose the existence of a flat, finitely presented $A$-algebra $A \to B$ for which $B_0 = B \otimes_A (A / I)$. So one might conjecture that Proposition~\ref{prop:directImageExactAll} is false for the $\fppf$-topology. If one believes such a conjecture, then one has some further information about $(\Sch / X, \fppf)$-local schemes.
\end{rema}

\begin{lemm}
Suppose that there exists a closed immersion of separated noetherian schemes \mbox{$i: Z \to X$} such that the direct image $i_*: \Shv_\fppf(\Sch / Z, \Ab) \to \Shv_\fppf(\Sch / X, \Ab)$ on abelian $\fppf$-sheaves is NOT exact. Then there exists an $(\Sch / X, \fppf)$-local scheme $P \to X$ with affine structural morphism, and a closed non-empty subscheme $Q \subseteq P$ such that $Q$ is not $(\Sch / X, \fppf)$-local.
\end{lemm}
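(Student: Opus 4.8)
The plan is to run the exactness criterion for $i_*$ backwards, the crux being a base-change identity for stalks. As a right adjoint (to the $\fppf$-pullback $i^*$) the functor $i_*$ is automatically left exact, so its failure to be exact means it fails to preserve some epimorphism: there is a surjection $s\colon G\twoheadrightarrow G''$ of abelian $\fppf$-sheaves on $\Sch/Z$ with $i_*s\colon i_*G\to i_*G''$ not surjective. Since every $\fppf$-covering family is refinable by a finite one (Definition~\ref{defi:topologies} and the remark after Theorem~\ref{theo:deligne}), Theorem~\ref{theo:equi} supplies a conservative family of fibre functors $\{\phi_P\}$ on $\Shv_\fppf(\Sch/X)$ indexed by $(\Sch/X,\fppf)$-local affine $X$-schemes $P$; as each $\phi_P$ is exact on abelian sheaves, epimorphisms are detected by this family. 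Hence I may fix an $(\Sch/X,\fppf)$-local affine $P\to X$ at which $\phi_P(i_*s)$ is not surjective.

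The heart of the argument is to identify $\phi_P\circ i_*$. Writing $P=\varprojlim_\lambda P_\lambda$ with $P_\lambda\in\Aff/X$ (Proposition~\ref{prop:equiLimAff}), one has $\phi_P(H)=\varinjlim_\lambda H(P_\lambda)$, while $(i_*G)(P_\lambda)=G(P_\lambda\times_X Z)$ by definition of the big-site direct image. Putting $Q:=P\times_X Z=\varprojlim_\lambda (P_\lambda\times_X Z)$, a closed subscheme of $P$ that is affine over $Z$, this yields a natural isomorphism
\[ \phi_P\circ i_* \;\cong\; \Bigl( G\longmapsto \varinjlim_\lambda G(P_\lambda\times_X Z) \Bigr), \]
and the right-hand side is precisely the functor $\phi_Q$ on $\Shv_\fppf(\Sch/Z)$ attached by Proposition~\ref{prop:proPoints} and Proposition~\ref{prop:equiLimAff} (over $Z$) to the pro-object $\{P_\lambda\times_X Z\}$, i.e.\ to $Q$. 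Consequently, if $Q$ were $(\Sch/Z,\fppf)$-local this functor would be a genuine fibre functor, hence exact, and $\phi_P(i_*s)=\phi_Q(s)$ would be surjective --- contrary to the choice of $P$. Therefore $Q$ is not $(\Sch/Z,\fppf)$-local. The same computation shows $Q\neq\varnothing$: were it empty, some $P_\lambda\times_X Z$ would be empty, so $G(P_\lambda\times_X Z)=G''(P_\lambda\times_X Z)=0$ for $\lambda$ small and $\phi_P(i_*s)$ would be the surjection $0\to 0$.

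It then remains to pass from non-locality over $Z$ to non-locality over $X$, and here I would use only the easy direction, avoiding any lifting of $\fppf$-covers from $Z$ to $X$ (the delicate operation flagged in the remark on Tag~04C5). Concretely, I claim a $Z$-scheme which is $(\Sch/X,\fppf)$-local is also $(\Sch/Z,\fppf)$-local: given an $\fppf$-cover $\{W_i\to W\}$ in $\Sch/Z$ and a $Z$-morphism $Q\to W$, the very same family is an $\fppf$-cover of $W$ in $\Sch/X$, because flatness, finite presentation and joint surjectivity of the morphisms $W_i\to W$ are intrinsic to the morphisms and do not refer to the base; $(\Sch/X,\fppf)$-locality then lifts $Q\to W$ to an $X$-morphism $Q\to W_i$, which is automatically a $Z$-morphism since its composite to $Z$ coincides with the structural morphism of $Q$. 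Taking contrapositives, the non-locality of $Q$ over $Z$ gives non-locality over $X$. Thus $P$ is an $(\Sch/X,\fppf)$-local affine $X$-scheme and $Q\subseteq P$ is a nonempty closed subscheme that is not $(\Sch/X,\fppf)$-local, as required. The single step demanding care is the stalk identification $\phi_P\circ i_*\cong\phi_Q$ together with the realization that exactness of $i_*$ at $P$ is governed exactly by the $(\Sch/Z,\fppf)$-locality of $Q=P\times_X Z$; once this base-change formula is secured, the rest is formal.
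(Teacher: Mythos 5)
Your proposal is correct, and its skeleton is the same as the paper's: both arguments take an epimorphism $s\colon G \to G''$ of abelian $\fppf$-sheaves on $Z$ with $i_*s$ not an epimorphism (using that the right adjoint $i_*$ is automatically left exact), use Theorem~\ref{theo:equi} to pick an $(\Sch/X,\fppf)$-local affine $P \to X$ at which $\phi_P(i_*s)$ fails to be surjective, and set $Q = Z\times_X P$. Where the two differ is in how one sees that this $Q$ does the job. The paper's proof is a one-liner that defers to the proof of Proposition~\ref{prop:directImageExactAll}: if $Z\times_X P$ were empty or local for every local affine $P$, the almost-cocontinuity argument there (with Proposition~\ref{prop:contLimites}) would run verbatim for $\fppf$ and force $i_*$ to be exact, a contradiction. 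You instead verify the chosen $Q$ directly, through the base-change identity $\phi_P\circ i_*\cong \bigl(G\mapsto \varinjlim_\lambda G(Z\times_X P_\lambda)\bigr)$, which holds at the level of colimit formulas whether or not $Q$ is local, and which also disposes of non-emptiness; this is more self-contained, avoiding almost cocontinuity and the citation of Stacks Tag 04B9 entirely, at the cost of redoing the limit arguments by hand. Your final step --- that a $Z$-scheme which is $(\Sch/X,\fppf)$-local is $(\Sch/Z,\fppf)$-local, whose contrapositive converts non-locality over $Z$ into the non-locality over $X$ demanded by the statement --- is correct, but its justification is slightly misstated: with the paper's definition $\fppf = \langle \Nis, \fps\rangle$, an $\fppf$-cover in $\Sch/Z$ is not literally a jointly surjective flat finitely presented family, but is refinable by finite compositions of Nisnevich and $\fps$ families; it is these generating families that are defined by base-independent properties of morphisms, and then Lemma~\ref{lemm:topologyConventions}\eqref{lemm:topologyConventions:refine} promotes this to the statement that covers in $\Sch/Z$ remain covers in $\Sch/X$. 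This is a one-line repair, not a gap.
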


\begin{proof}
The converse contradicts the proof of Proposition~\ref{prop:directImageExactAll}. More explicitly, if $F \to F'$ is an epimorphism of abelian $\fppf$-sheaves on $Z$ such that $i_*F \to i_*F'$ is not an epimorphism, let $P \to X$ correspond to a fibre functor $\phi$ on $\Shv_\fppf(\Sch / X, \Ab)$ under which $\phi(i_*F) \to \phi(i_*F')$ is not surjective and $Q = Z {\times_X} P$.
\end{proof}

\section{Exactness of direct image} \label{sect:directImage}

In this section we show that direct image along a closed immersion is exact for various topologies.

Recall that a functor $u: \C \to \C'$ between small categories $\C, \C'$ equipped with topologies $\tau, \tau'$ respectively is \emph{continuous} if for every $\tau'$-sheaf $F$ the composition $F \circ u$ is a $\tau$-sheaf \cite[Def.\ III.1.1]{SGA41}. If fibre products are representable in $\C$ and $u$ commutes with them, then $u$ is continuous if and only if for every $\tau$-covering family \mbox{$\{U_i \to X\}_{i \in I}$} in $\C$, the family \mbox{$\{u(U_i) \to u(X)\}_{i \in I}$} is a $\tau'$-covering family \cite[Prop. III.1.6]{SGA41}.

We would like to use the notion of a \emph{co}continuous morphism of sites given in \cite[Def.\ III.2.1]{SGA41} (see also \cite[Def.\ II.1.2]{SGA41}). However, closed immersions with non empty complement do not give rise to a cocontinuous morphism for the usual topologies, see \cite[\href{http://stacks.math.columbia.edu/tag/00XV}{Tag 00XV}]{Sta}.

\begin{defi}[{\cite[\href{http://stacks.math.columbia.edu/tag/04B4}{Tag 04B4}]{Sta}}] \label{defi:almostCo}
A functor $u: \C \to \C'$ between small categories $\C, \C'$ equipped with topologies $\tau, \tau'$ respectively is \emph{almost cocontinuous} if for every $\tau'$-covering family of the form  \mbox{$\mathcal{U} = \{U_i \to u(X)\}_{i \in I}$} there exists a $\tau$-covering family $\mathcal{V} = \{V_j \to X\}_{j \in J}$ such that either
\begin{enumerate}
 \item the image of $\mathcal{V}$ under $u$ is a refinement of $\mathcal{U}$, or
 \item for each $j$, the empty family is a covering of $u(V_j)$ in $\C'$.
\end{enumerate}
\end{defi}

\begin{rema}
The stacks project calls the functor $u$ in the above definition \mbox{\emph{cocontinuous}} if for every $\mathcal{U}$ there exists a $\mathcal{V}$ such that the first condition is satisfied (\cite[\href{http://stacks.math.columbia.edu/tag/00XI}{Tag 00XI}]{Sta}).
\end{rema}

\begin{prop} \label{prop:contLimites}
Suppose that $u: \C \to \C'$ is a functor between categories small $\C, \C'$ equipped with topologies $\tau, \tau'$.
\begin{enumerate}
 \item \label{prop:contLimites:one}If $u$ is continuous then the functor $- \circ u$ preserves sheaves, and therefore induces a functor $u_s: \Shv_\tau(\C') \to \Shv_\tau(\C)$ \cite[Paragraph III.1.11]{SGA41}. Furthermore, if $\C$ is small, then $u_s$ admits a left adjoint $u^s$, and therefore $u_s$ preserves all small limits \cite[Prop. 1.3(1)]{SGA41}.
 \item For the sake of rigour,%
\footnote{The Stacks Project uses slightly less general definitions of ``site'', ``topology'', ``continuous'' than SGA. However, if the underlying categories are small, all fibre products are representable, and $u$ preserves fibre products, then the SGA meaning of these terms agrees with The Stacks Project's meaning.} %
 suppose that fibre products are representable in $\C$ and $u$ preserves them. If $u$ is continuous and almost cocontinuous, then $u_s$ preserves finite connected colimits. {\cite[\href{http://stacks.math.columbia.edu/tag/04B9}{Tag 04B9}]{Sta}}
\end{enumerate}
\end{prop}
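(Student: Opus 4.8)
The plan is to treat the two parts separately, with essentially all of the content concentrated in~(2).

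For part~(1) the assertion that $-\circ u$ preserves sheaves is just the definition of continuity recalled above (a functor is continuous precisely when $F\circ u$ is a $\tau$-sheaf for every $\tau'$-sheaf $F$), so restricting $-\circ u$ to sheaves yields $u_s\colon \Shv_{\tau'}(\C')\to\Shv_\tau(\C)$. To produce the left adjoint I would first work with presheaves: precomposition $u^p=-\circ u\colon\PreShv(\C')\to\PreShv(\C)$ has a left adjoint $u_!$ given by left Kan extension along $u$, and I would set $u^s=a_{\C'}\circ u_!\circ\iota$, where $\iota\colon\Shv_\tau(\C)\hookrightarrow\PreShv(\C)$ is the inclusion and $a_{\C'}$ is $\tau'$-sheafification. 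The adjunction $u^s\dashv u_s$ is then a short computation chaining the sheafification adjunction, the Kan-extension adjunction, the identity $u^p(\iota F)=\iota(u_s F)$ (valid because $u$ is continuous), and full faithfulness of $\iota$. Being a right adjoint, $u_s$ preserves all small limits.

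For part~(2) the key point is that precomposition is exact on presheaves: presheaf colimits are computed objectwise and $-\circ u$ is objectwise, so $-\circ u$ commutes with \emph{every} presheaf colimit. Hence for a connected diagram $F_\bullet\colon\Lambda\to\Shv_{\tau'}(\C')$, writing $P=\varinjlim^{\mathrm{pre}}_\Lambda F_\bullet$ for the presheaf colimit, the sheaf colimit of $u_s F_\bullet$ is $a_\C(P\circ u)$ while $u_s$ of the sheaf colimit of $F_\bullet$ is $(a_{\C'}P)\circ u=u_s(a_{\C'}P)$, which is a sheaf by continuity. Thus the whole statement reduces to showing that the canonical comparison $a_\C(P\circ u)\to(a_{\C'}P)\circ u$ is an isomorphism. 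Since both sides are sheaves, it suffices to show that the map of presheaves $\eta_P\circ u\colon P\circ u\to(a_{\C'}P)\circ u$ induced by the sheafification unit $\eta_P\colon P\to a_{\C'}P$ is a $\tau$-local isomorphism (bicovering), for then applying $a_\C$ yields the desired iso.

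The heart of the argument is that continuity together with almost cocontinuity transports the bicovering property of $\eta_P$ from $(\C',\tau')$ to $(\C,\tau)$. Concretely, $\eta_P$ is both a $\tau'$-local epimorphism and a $\tau'$-local monomorphism; to verify the same for $\eta_P\circ u$ over an object $U\in\C$, one takes a $\tau'$-cover of $u(U)$ witnessing the relevant local condition and uses almost cocontinuity to pull it back to a $\tau$-cover $\{V_j\to U\}$ such that, for each $j$, either (a) $u(V_j)\to u(U)$ factors through a member of the given $\tau'$-cover, or (b) the empty family covers $u(V_j)$. The main obstacle is alternative~(b), where the presheaf $P$ a priori carries no information; this is exactly where I would use that $F_\bullet$ is a diagram of \emph{sheaves} indexed by a \emph{connected} category. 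If $u(V_j)$ is covered by the empty $\tau'$-family then the sheaf condition for the empty cover forces $F_i(u(V_j))=\{\ast\}$ for every $i$, and the colimit of a connected diagram of singletons is again a singleton, so $(P\circ u)(V_j)=\varinjlim_\Lambda F_\bullet(u(V_j))=\{\ast\}=(a_{\C'}P)(u(V_j))$; hence $\eta_P\circ u$ is already an isomorphism over such $V_j$, and~(b) causes no trouble. Over the $V_j$ of type~(a) one simply restricts the witnessing sections, resp. the witnessing identity of sections, along the factorization.

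Combining the two cases over the cover $\{V_j\to U\}$ shows that $\eta_P\circ u$ is locally surjective and locally injective, hence bicovering, which completes the proof. I would remark that the argument uses only connectedness of $\Lambda$, not finiteness; and that the hypotheses that fibre products exist in $\C$ and are preserved by $u$ serve only to align the notions of continuity, covering, and sheafification with the framework of the cited reference.
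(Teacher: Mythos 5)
The paper gives no proof of this proposition beyond the citations to \cite[III.1]{SGA41} and to \cite[Tag 04B9]{Sta}, and your argument is a correct reconstruction of exactly those proofs: part (1) is the standard adjunction $u^s \dashv u_s$ obtained from Kan extension plus sheafification, and part (2) is precisely the mechanism of Stacks Tags 04B8--04B9, reducing everything to the bicovering property of $\eta_P \circ u$ and disposing of the empty-cover alternative in almost cocontinuity by noting that sheaves take the value $\{\ast\}$ on objects covered by the empty family and that a connected colimit of singletons is a singleton. Your closing remarks are also accurate: finiteness of the index category is never used (only connectedness and smallness), and the fibre-product hypotheses serve only to reconcile the SGA and Stacks Project conventions, so there is no gap.
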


\begin{warn}
We follow the notation $u^s, u_s$ from \cite{SGA41}. Unfortunately \cite{Sta} writes $u^s$ for the $u_s$ of \cite{SGA41}, and $u_s$ for the $u^s$ of \cite{SGA41}. So the adjunction $(u^s, u_s)$ of \cite{SGA41} is written in \cite{Sta} as $(u_s, u^s)$.
\end{warn}

\begin{prop} \label{prop:directImageExactAll}
Suppose that $i: Z \to X$ is a closed immersion of separated noetherian schemes. Then the direct image
\[ i_*: \Shv_\tau(\Sch / Z, \Ab) \to \Shv_\tau(\Sch / X, \Ab) \]
is exact if $\tau$ is any of Zar, Nis, ét, rh, cdh, ldh, eh, or h.
\end{prop}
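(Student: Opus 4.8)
The plan is to realize $i_*$ as a functor of the form $u_s$ attached to a morphism of sites and then to apply Proposition~\ref{prop:contLimites}. Let $u : \Sch / X \to \Sch / Z$ be the base-change functor $T \mapsto Z \times_X T$ along the closed immersion $i$. This $u$ preserves fibre products, and by the defining formula for direct image on big sites one has $(i_* F)(T) = F(Z \times_X T)$, so that $i_* = u_s$ in the notation of Proposition~\ref{prop:contLimites}. In particular $i_*$ is a right adjoint (to $u^s$), hence preserves all small limits and is left exact; being additive, it then suffices to show that it preserves epimorphisms, i.e. is right exact.

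First I would record that $u$ is continuous. For each $\tau$ in the list the generating covering families (open immersions; étale or completely decomposed étale morphisms; proper or completely decomposed proper morphisms) are stable under arbitrary base change, and complete decomposition is preserved by base change, so the image under $u$ of a $\tau$-covering family of $T$ is a $\tau$-covering family of $Z \times_X T$. By the criterion recalled before Definition~\ref{defi:almostCo} (applicable since $u$ preserves fibre products) this is precisely continuity.

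The heart of the argument, and the step I expect to be the main obstacle, is that $u$ is almost cocontinuous (Definition~\ref{defi:almostCo}). Writing $Z_T = Z \times_X T = u(T)$ and $U_T = (X \setminus Z) \times_X T \hookrightarrow T$ for the open complement, I must extend a given $\tau$-covering family $\{U_i \to Z_T\}$ of the closed fibre to a $\tau$-covering family $\{V_j \to T\}$ of the whole scheme such that each $Z \times_X V_j$ either refines into some $U_i$ or is empty. The two mechanisms are: (i) open and (completely decomposed) étale covers of $Z_T$ extend over a Zariski neighbourhood of $Z_T$ in $T$ — for the étale case one lifts the defining equation of a standard-étale presentation using the surjection $\OO_T \twoheadrightarrow \OO_{Z_T}$ and restricts to the open locus where its derivative is invertible; and (ii) a cover arising from a blow-up of $Z_T$ with centre $C \subseteq Z_T$ extends by blowing up $T$ along the same $C$, which is closed in $T$ because $Z_T$ is. For the latter the universal property of the blow-up shows that the restriction to $Z_T$ of $\mathrm{Bl}_C T \to T$ factors through $\mathrm{Bl}_C Z_T$, so the abstract blow-up square $\{\mathrm{Bl}_C T \to T,\ (T \setminus C) \hookrightarrow T\}$ of $T$ restricts over $Z$ to a refinement of the given family. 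Adjoining $U_T \to T$ (whose fibre over $Z$ is empty) handles the part of $T$ lying off $Z$, and an induction on the length of a composite covering reduces a general $\tau$-cover to these generators. The subtle point is that one cannot merely push a proper cover $W \to Z_T \hookrightarrow T$ forward and adjoin $U_T$: a valuation ring with generic point in $U_T$ and closed point in $Z_T$ would then lift to neither piece. Extending the \emph{centres} of blow-ups rather than the covers circumvents this, and it is available precisely because each $\tau$ contains the Zariski topology and, for $\rh, \cdh, \eh, \htop$, enough proper covers to perform blow-ups of $T$ itself — the same feature that fails for $\qfh$, which has no proper modifications of the ambient scheme and for which the conclusion is indeed false.

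With continuity and almost cocontinuity established, exactness follows directly. Given an epimorphism $F \to G$ of abelian $\tau$-sheaves on $\Sch / Z$ and a section $g \in (i_* G)(T) = G(Z_T)$, local surjectivity provides a $\tau$-cover $\{U_i \to Z_T\}$ over which $g$ lifts to $F$; applying almost cocontinuity yields a $\tau$-cover $\{V_j \to T\}$ over which $g$ lifts to $i_* F$ — over the pieces with $Z \times_X V_j = \varnothing$ because $(i_* G)(V_j) = G(\varnothing) = 0$, and over the refining pieces by restricting the lift along $Z \times_X V_j \to U_i$. Hence $i_* F \to i_* G$ is locally surjective, so $i_*$ preserves epimorphisms and is therefore exact. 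This is the abelian-sheaf analogue of Proposition~\ref{prop:contLimites}(2); one may also deduce it from that statement upon noting that a cokernel is the coequalizer of a parallel pair, a finite connected colimit.
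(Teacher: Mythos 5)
Your reduction of exactness to continuity plus almost cocontinuity is correct, and your treatment of $\Zar$, $\Nis$, $\Et$ (lift standard-\'etale presentations across $\OO_T \twoheadrightarrow \OO_{Z_T}$, adjoin $U_T$) is sound. The gap is in mechanism (ii), which is exactly the step carrying the $\rh$, $\cdh$, $\ldh$, $\eh$, $\htop$ cases: the claim that ``the universal property of the blow-up shows that the restriction to $Z_T$ of $\mathrm{Bl}_C T \to T$ factors through $\mathrm{Bl}_C Z_T$'' is false. The restriction is the full fibre product $Z_T \times_T \mathrm{Bl}_C T$, and since $C \subseteq Z_T$ it contains the entire exceptional divisor $E$ of $\mathrm{Bl}_C T$, on which $I_C \cdot \OO_E = 0$ is not invertible, so the universal property produces no map. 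Concretely, take $T = \AA^3$, $Z_T = V(x) \cong \AA^2$, and $C$ the origin. Then $Z_T \times_T \mathrm{Bl}_C T$ is the union of the strict transform $\widetilde{Z_T} \cong \mathrm{Bl}_C Z_T$ and $E \cong \PP^2$, glued along a line $\ell$. Any $Z_T$-morphism $h \colon Z_T \times_T \mathrm{Bl}_C T \to \mathrm{Bl}_C Z_T$ must equal the identity on $\widetilde{Z_T}$ (a $Z_T$-endomorphism of an integral separated $Z_T$-scheme that is the identity over the dense open $Z_T \setminus C$), while $h|_E$ must land in the fibre of $\mathrm{Bl}_C Z_T \to Z_T$ over the origin, a $\PP^1$; thus $h|_E \colon \PP^2 \to \PP^1$ would restrict to an isomorphism on $\ell$, contradicting the fact that every morphism $\PP^2 \to \PP^1$ is constant. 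So the image of your cover of $T$ does \emph{not} refine $\{\mathrm{Bl}_C Z_T \to Z_T,\ C \hookrightarrow Z_T\}$; the valuation-ring trap you correctly identified is not circumvented by extending centres. There is a second, independent gap: $\htop$- and $\ldh$-covers are not generated by open covers and blow-ups alone — they contain finite surjective (resp.\ finite flat surjective of degree prime to $l$) morphisms, e.g.\ normalizations in field extensions, and your induction on generators has no mechanism for extending those across $Z_T \hookrightarrow T$.

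This is precisely why the paper does not construct the cover of $T$ by explicit geometry but by points, using the machinery it has built. For each $(\Sch/X,\tau)$-local affine $P \to X$ equipped with an $X$-morphism $P \to T$, Theorem~\ref{theo:localSchemes} shows that $Z \times_X P$ — for the $\rh$-family, $(Z \times_X P)_\red$, after replacing $\Shv_\tau(\Sch/Z)$ by the equivalent category of sheaves on reduced finite-type schemes over $Z_\red$ — is either empty or again $\tau$-local: quotients of (strictly) henselian local rings are (strictly) henselian local, and a \emph{reduced} quotient of a valuation ring is a quotient by a prime ideal, hence again a valuation ring, with henselianness, strict henselianness, and the \aic{} property inherited. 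Hence the closed fibre of $P$ lifts through some $U_i$; the limit arguments of \cite[Prop.\ 8.13.1]{EGAIV3} spread this lift out to a finite-type $V_P \to T$ through which $P \to T$ factors, with $Z \times_X V_P$ mapping to $U_i$; and the family $\{V_P \to T\}$, indexed by all such $P \to T$, is a $\tau$-covering family \emph{not} because it is exhibited as a composite of generating covers, but because it is surjective on every point and coverings are detected by points (Lemma~\ref{lemm:topologyConventions}(\ref{lemm:topologyConventions:fibreDetects}), applicable thanks to Deligne's theorem via Theorem~\ref{theo:equi}). Any repair of your argument would in effect have to prove this point-lifting statement; it is the valuation rings, not the blow-ups, that do the work in the resolution-of-singularities-type cases.
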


\begin{proof}
The functor $Z \times_X -: \Sch / X \to \Sch / Z$ is a continuous morphism of sites and so $i_*$ has a left adjoint and therefore preserves all small limits (Proposition~\ref{prop:contLimites}(\ref{prop:contLimites:one})).

If $\tau = \Zar, \Nis$, or $\Et$ we claim that the functor $Z {\times_X} -: \Sch / X \to \Sch / Z$ is also an almost cocontinuous morphism of sites. Let $Y \in \Sch / X$ and consider a $\tau$-covering family $\mathcal{U} = \{U_i \to Z {\times_X} Y\}_{i \in I}$ in $\Sch / Z$. To prove the claim, we must find a $\tau$-covering family in $\mathcal{V} = \{V_j \to Y\}_{j \in J}$ that satisfies one of the two conditions in Definition~\ref{defi:almostCo}.

If $Z {\times_X} Y$ is the empty scheme, then $\mathcal{V} = \{ Y \stackrel{\id}{\to} Y \}$ satisfies the second condition. If $Z {\times_X} Y$ is non-empty, then we will construct a family \mbox{$\mathcal{V} = \{p_j : V_j \to Y \}_{j \in J}$} such that for every $(\Sch / X, \tau)$-local affine $X$-scheme the morphism $\amalg_{j \in J} \hom_X(P, V_j) \to \hom_X(P, Y)$ it induces is surjective, and the image of $\mathcal{V}$ under $Z {\times_X}-$ refines $\mathcal{U}$. Since every fibre functor of $\Sch_\tau(\Sch / X)$ is induced by a $(\Sch / X, \tau)$-local affine (Theorem~\ref{theo:equi}) it then follows that for every fibre functor $\phi$, the family $\{ \phi(p_j) \}_{j \in J}$ is jointly surjective and therefore $\mathcal{V}$ is a covering family (Lemma~\ref{lemm:topologyConventions}(\ref{lemm:topologyConventions:fibreDetects})), and the second condition in Definition~\ref{defi:almostCo} is satisfied.

For each $(\Sch / X, \tau)$-local affine $X$-scheme $P \to X$, by Theorem~\ref{theo:localSchemes} the $Z$-scheme $Z {\times_X} P \to Z$ is either empty or $(\Sch / Z, \tau)$-local and affine. Therefore, as $I$ is non-empty, for each $X$-morphism \mbox{$j: P \to Y$} there is an $i$ such that $Z {\times_X} P \to Z {\times_X} Y$ factors through the morphism $U_i \to Z {\times_X} Y$. If one presents $P$ as the inverse limit of a pro-object $(P_\lambda)_{\lambda \in \Lambda}$ of $\Aff / Y$ (using our chosen morphism $P \to Y$), then we have $Z {\times_X} P \cong Z {\times_X} (\varprojlim_\Lambda P_\lambda) \cong \varprojlim_\Lambda (Z {\times_X} P_\lambda)$ and the standard limit arguments (\cite[Prop. 8.13.1]{EGAIV3}) provide a $\lambda$ and a factorization $Z {\times_X} P \to Z {\times_X} P_\lambda \to U_i \to Z {\times_X} Y$. That is, we have a $V_P \to Y$ in $\Sch / X$ (take $V_P = P_\lambda$) equipped with factorizations $P \to V_P \to Y$ and $Z {\times_X} (V_P) \to U_i \to Z {\times_X} Y$. The family $\{V_j \to Y \}_{\{P \stackrel{j}{\to} Y\}}$ indexed by the class $J$ of morphisms with source a $(\Sch / X, \tau)$-local affine and target $Y$ satisfies our requirements.

If $\tau = $ rh, cdh, ldh, eh, or h, then the same proof works with slight modifications. We replace $\Shv_\tau(\Sch / Z)$ with the equivalent category $\Shv_{\tau_\red}(\Sch_\red / Z_{\red})$ where $\Sch_\red / Z_{\red}$ is the category of \emph{reduced} separated schemes of finite type over $Z_\red$ equipped with the topology $\tau_\red$ induced from the inclusion $\Sch_\red(Z_\red) \subseteq \Sch / Z$. We must also replace $Z {\times_X} P$ with $(Z {\times_X} P)_{\red}$. Then the above proof works.
\end{proof}


\begin{rema}
The above proof does not work with $\qfh$ because even though every integral closed subscheme of a $(\Sch / S, \qfh)$-local scheme with affine structural morphism is $(\Sch / S, \qfh)$-local (\cite[Lem.\ 4.1]{Dat12}), this is not true for reducible reduced closed subschemes.
\end{rema}


\begin{thebibliography}{SGA72b}

\bibitem[Dat12]{Dat12}
R.~Datta.
\newblock {Polygons in Quadratically Closed Rings and Properties of n-adically
  Closed Rings}.
\newblock {\em Arxiv preprint arXiv:1209.2220}, 2012.

\bibitem[Ful98]{Ful}
W.~Fulton.
\newblock {\em Intersection theory}, volume~2 of {\em Ergebnisse der Mathematik
  und ihrer Grenzgebiete. 3. Folge. A Series of Modern Surveys in Mathematics
  [Results in Mathematics and Related Areas. 3rd Series. A Series of Modern
  Surveys in Mathematics]}.
\newblock Springer-Verlag, Berlin, second edition, 1998.

\bibitem[Gei06]{Gei06}
T.~Geisser.
\newblock Arithmetic cohomology over finite fields and special values of
  {$\zeta$}-functions.
\newblock {\em Duke Math. J.}, 133(1):27--57, 2006.

\bibitem[GL01]{GL}
T.~G. Goodwillie and S.~Lichtenbaum.
\newblock A cohomological bound for the {$h$}-topology.
\newblock {\em Amer. J. Math.}, 123(3):425--443, 2001.

\bibitem[EGAII]{EGAII}
A.~Grothendieck.
\newblock \'{E}l\'ements de g\'eom\'etrie alg\'ebrique (r\'edig\'es avec la collaboration de Jean Dieudonn\'e) {II}. \'{E}tude globale \'el\'ementaire de quelques classes de morphismes.
\newblock {\em Inst. Hautes \'Etudes Sci. Publ. Math.}, (8):222, 1961.

\bibitem[EGAIV3]{EGAIV3}
A.~Grothendieck.
\newblock \'{E}l\'ements de g\'eom\'etrie alg\'ebrique (r\'edig\'es avec la collaboration de Jean Dieudonn\'e) {IV}. \'{E}tude locale
  des sch\'emas et des morphismes de sch\'emas, Troisi\`eme partie.
\newblock {\em Inst. Hautes \'Etudes Sci. Publ. Math.}, (28):255, 1966.

\bibitem[EGAIV4]{EGAIV4}
A.~Grothendieck.
\newblock \'{E}l\'ements de g\'eom\'etrie alg\'ebrique (r\'edig\'es avec la collaboration de Jean Dieudonn\'e) {IV}. \'{E}tude locale
  des sch\'emas et des morphismes de sch\'emas, Quatri\`eme partie.
\newblock {\em Inst. Hautes \'Etudes Sci. Publ. Math.}, (32):361, 1967.

\bibitem[Joh77]{Joh77}
P.~T. Johnstone.
\newblock {\em Topos theory}.
\newblock Academic Press [Harcourt Brace Jovanovich Publishers], London, 1977.
\newblock London Mathematical Society Monographs, Vol. 10.

\bibitem[Kel12]{Kel12}
S.~Kelly.
\newblock {\em Triangulated categories of motives in positive characteristic}.
\newblock PhD thesis, Universit{\'e} Paris 13, Australian National University,
  2012.

\bibitem[Nis89]{Nis89}
Y. Nisnevich.
\newblock The completely decomposed topology on schemes and associated descent
  spectral sequences in algebraic {$K$}-theory.
\newblock In {\em Algebraic {$K$}-theory: connections with geometry and
  topology ({L}ake {L}ouise, {AB}, 1987)}, volume 279 of {\em NATO Adv. Sci.
  Inst. Ser. C Math. Phys. Sci.}, pages 241--342. Kluwer Acad. Publ.,
  Dordrecht, 1989.

\bibitem[Sch14]{Sch14}
S.~Schr{\"o}er.
\newblock {Points in the fppf topology, preprint}.
\newblock {\em Arxiv preprint arXiv:1407.5446}, 2014.

\bibitem[SGA70]{SGA31}
{\em Sch{\'e}mas en groupes. I: Propri{\'e}t{\'e}s g{\'e}n{\'e}rales des
  sch{\'e}mas en groupes}.
\newblock Lecture Notes in Mathematics, Vol. 151. Springer-Verlag, Berlin,
  1970.
\newblock S{\'e}minaire de G{\'e}om{\'e}trie Alg{\'e}brique du Bois-Marie
  1962/1964 (SGA 3), Dirig{\'e} par M. Demazure et A. Grothendieck.

\bibitem[SGA72a]{SGA41}
{\em Th\'eorie des topos et cohomologie \'etale des sch\'emas. {T}ome 1:
  {T}h\'eorie des topos}.
\newblock Lecture Notes in Mathematics, Vol. 269. Springer-Verlag, Berlin,
  1972.
\newblock S{\'e}minaire de G{\'e}om{\'e}trie Alg{\'e}brique du Bois-Marie
  1963--1964 (SGA 4), Dirig{\'e} par M. Artin, A. Grothendieck, et J. L.
  Verdier. Avec la collaboration de N. Bourbaki, P. Deligne et B. Saint-Donat.

\bibitem[SGA72b]{SGA42}
{\em Th\'eorie des topos et cohomologie \'etale des sch\'emas. {T}ome 2}.
\newblock Lecture Notes in Mathematics, Vol. 270. Springer-Verlag, Berlin,
  1972.
\newblock S{\'e}minaire de G{\'e}om{\'e}trie Alg{\'e}brique du Bois-Marie
  1963--1964 (SGA 4), Dirig{\'e} par M. Artin, A. Grothendieck et J. L.
  Verdier. Avec la collaboration de N. Bourbaki, P. Deligne et B. Saint-Donat.

\bibitem[{Sta}14]{Sta}
The {Stacks Project Authors}.
\newblock {\itshape Stacks Project}.
\newblock \url{http://stacks.math.columbia.edu}, 2014.

\bibitem[SV00]{SV00}
A.~Suslin and V.~Voevodsky.
\newblock Bloch-{K}ato conjecture and motivic cohomology with finite
  coefficients.
\newblock In {\em The arithmetic and geometry of algebraic cycles ({B}anff,
  {AB}, 1998)}, volume 548 of {\em NATO Sci. Ser. C Math. Phys. Sci.}, pages
  117--189. Kluwer Acad. Publ., Dordrecht, 2000.

\bibitem[Voe96]{Voe96}
V.~Voevodsky.
\newblock Homology of schemes.
\newblock {\em Selecta Math. (N.S.)}, 2(1):111--153, 1996.

\end{thebibliography}

\end{document}